\documentclass[12pt]{article}
\usepackage{amsmath}
\usepackage[affil-it]{authblk}
\usepackage{amsfonts}
\usepackage{amsthm}
\usepackage{amssymb}
\usepackage{makecell}
\usepackage{appendix}
\usepackage{algorithmic}
\usepackage{caption}
\usepackage{graphicx}
\usepackage{algorithm2e}
\usepackage{tabu}
%% I included the package 'color' on 02/22/2022 (Alar)
\usepackage{color}
%% I included the package 'soul' for  strikethrought text (21/11/2023). Alar
\usepackage{soul}
\usepackage{url}
\definecolor{darkgreen}{rgb}{0.0, 0.63, 0.0}
\theoremstyle{definition}

\usepackage{mathtools}

\theoremstyle{lemma}
\newtheorem{theorem}{Theorem}
\newtheorem{corollary}{Corollary}[theorem]

\newtheorem{proposition}{Proposition}
\newtheorem{conjecture}{Conjecture}

\newcommand{\norm}{\text{Nm}}

\newcommand{\trace}{\text{Tr}}

\usepackage[margin=0.9in]{geometry}

\title{Unit Reducible Cyclotomic Fields}
\author{Christian Porter, Piero Sarti, Cong Ling, Alar Leibak}
\date{July 2023}

\begin{document}

\maketitle
\begin{abstract}
    In this paper, we continue the study of unit reducible fields as introduced in \cite{LPL23} for the special case of cyclotomic fields. Specifically, we deduce that the cyclotomic fields of conductors $2,3,5,7,8,9,12,15$ are all unit reducible, and show that any cyclotomic field of conductor $N$ is not unit reducible if $2^4, 3^3, 5^2, 7^2, 11^2$ or any prime $p \geq 13$ divide $N$, meaning the unit reducible cyclotomic fields are finite in number. Finally, if $a$ is a totally positive element of a cyclotomic field, we show that for all equivalent $a^\prime$, the discrepancy between $\trace_{K/\mathbb{Q}}(a^\prime)$ and the shortest nonzero element of the quadratic form $\trace_{K/\mathbb{Q}}(axx^*)$ where $x$ is taken from the ring of integers tends to infinity as the conductor $N$ goes to infinity.
\end{abstract}
\section{Introduction}
Let $K$ be a Galois extension of $\mathbb{Q}$ of degree $n$, where $n$ is either $r$ if $K$ is totally real or $2s$ if $K$ is totally complex, for some integers $r$ or $2s$. Associate to $K$ the canonical embeddings $\sigma_1,\dots, \sigma_n$ into $\mathbb{R}$ if $K$ is totally real, or into $\mathbb{C}$, if $K$ is totally complex. We define a quadratic form $f:K^m \to K$ by
\begin{align*}
    f(x_1,\dots,x_m)=\sum_{k,l=1}^m f_{k,l}x_kx_l^*,
\end{align*}
where $f_{k,l} \in K$ and $*$ denotes complex conjugation if $K$ is totally complex, or fixes $x_l$ if $K$ is totally real. Usually, we take $f_{k,l}=f_{l,k}^*$ for all $k,l$. $f$ is said to be positive definite if
\begin{align*}
    \sigma_i(f)(x_1,\dots,x_m)=\sum_{k,l} \sigma_i(f_{k,l})\sigma_i(x_k)\sigma_i(x_l)^*
\end{align*}
is a real positive definite quadratic form or positive definite hermitian form, for all $1 \leq i \leq n$. From now on, we will assume all quadratic forms are positive-definite and simply refer to them as quadratic forms.

A quadratic form $f$ is called a unary form if it is defined in a single variable in $K$, i.e. the quadratic form takes the form $axx^*$. We say that the element $a$ is totally positive if $\sigma_i(a) \in \mathbb{R}_{>0}$ for all $1 \leq i \leq n$.  We denote the set of totally positive elements of $K$ by $K_{>>0}$. Let $\mathcal{O}_K$ and $\mathcal{O}_K^*$ respectively denote the ring of integers and unit group of $K$. Then for any $a \in K_{>>0}$, we will use the notation
\begin{align*}
    &\mu(a)=\min_{x \in \mathcal{O}_{K} \setminus \{0\}}\trace_{K/\mathbb{Q}}(axx^*),
    \\&\mu^*(a)=\min_{u \in \mathcal{O}_K^*} \trace_{K/\mathbb{Q}}(auu^*),
    \\& \mathcal{M}(a)=\{x \in \mathcal{O}_K: \trace_{K/\mathbb{Q}}(axx^*)=\mu(a)\}.
\end{align*}
We say that $a$ is reduced if $\trace_{K/\mathbb{Q}}(auu^*) \geq \trace_{K/\mathbb{Q}}(a)(=\mu^*(a))$ for all $u \in \mathcal{O}_K^*$. Note that the set $\{auu^*: u \in \mathcal{O}_K^*\}$ induces an equivalence class for each totally positive $a$. The set $\mathcal{F}_K$ denotes the set of all reduced totally positive elements of $K$, which we refer to as the reduction domain of $K$. 

The field $K$ is unit reducible if $\mu(a)=\mu^*(a)$ for all $a \in K_{>>0}$ - that is, every totally positive $a$ attains its trace-minimum at a unit. In \cite{LPL23}, it was shown that a real quadratic field $\mathbb{Q}(\sqrt{d})$ is unit reducible if and only if $d$ took one of the following forms:
\begin{align*}
    &d=m^2+1, \hspace{1mm} m \in \mathbb{N}, \hspace{1mm} m \hspace{1mm} \text{odd},
    \\&d=m^2-1, \hspace{1mm} m \in \mathbb{N}, \hspace{1mm} m \hspace{1mm} \text{even},
    \\&d=m^2+4, \hspace{1mm} m \in \mathbb{N}, \hspace{1mm} m \hspace{1mm} \text{even},
    \\&d=m^2-4, \hspace{1mm} m \in \mathbb{N}, \hspace{1mm} m>3 \hspace{1mm} \text{odd}.
\end{align*}
Moreover, it was shown that the cubic field with defining polynomial
\begin{align*}
    x^3-tx^2-(t+3)x-1, \hspace{1mm}  t \in \mathbb{Z}_{\geq 0},
\end{align*}
is unit reducible if the field is monogenic. Specifically, we will call a unit reducible field $K$ strongly unit reducible if $\mathcal{M}(a) \subset \mathcal{O}_K^*$ for all totally positive $a$ and weakly unit reducible field otherwise.

We define by $\zeta_N=\exp(2\pi i/N)$ a primitive $N$th root of unity. We call the field $K=\mathbb{Q}(\zeta_N)$ the cyclotomic field of conductor $N$, which has degree $\phi(N)$ over $\mathbb{Q}$. We will use the notation $K_N=\mathbb{Q}(\zeta_N)$ from now on. 

We define the reduction discrepancy of a field by the quantity
\begin{align*}
    \delta_K=\sup_{a \in K_{>>0}} \frac{\mu^*(a)}{\mu(a)}.
\end{align*}
Clearly $\delta_K \geq 1$ for every field $K$, and if $\delta_K=1$ then $K$ is unit reducible.

Finally, the Hermite-Humbert constant of a field $K$ is defined by
\begin{align*}
    \gamma_K=\sup_{a \in K_{>>0}}\frac{\mu(a)}{\norm_{K/\mathbb{Q}}(a)^{1/n}}.
\end{align*}
 For any positive definite real quadratic form $q(x_1,\dots,x_n)$ of rank $n$, we define by
\begin{align*}
    \gamma(q)=\frac{\min_{(x_1,\dots,x_n) \in \mathbb{Z}^n \setminus (0,\dots,0)}q(x_1,\dots,x_n)}{\det(q)^{1/n}},
\end{align*}
where $\det(q)$ is the determinant of the matrix $Q=\{q_{i,j}\}_{1 \leq i,j \leq n}$, where $q(x_1,\dots,x_n)=\sum_{i,j=1}^n q_{i,j}x_ix_j$. Then the Hermite constant $\gamma_n=\max_q \gamma(q)$, where the maximum is taken over all positive definite real quadratic forms $q$.

\subsection{Motivation}
In this paper, we are concerned with unit reducible cyclotomic fields. Cyclotomic fields are of significant importance in the field of post-quantum cryptography, underpinning the security and enjoying efficient implementation of mainstream lattice-based cryptosystems (see e.g. \cite{LPR,SS}). In July 2022, the US National Institute of Standards and Technology (NIST) announced the selection of four post-quantum cryptographic algorithms to be standardized, three of which are based on cyclotomic fields \cite{NIST_Round3}. Lattice-based cryptography relies on the hardness of the ``shortest vector problem" (SVP), where an adversary tries to find a shortest nonzero vector in a given lattice. A common strategy of attack is to run lattice reduction over these number fields to find short vectors. Some cyclotomic lattice cryptosystems make use of principal ideals (see e.g. \cite{SV10,CGS14,GGH13,LSS14}). The study of principal ideals as lattices has a natural correspondence with the theory of unary forms. Finding the smallest nonzero value of a unary quadratic form gives the shortest vector of such a lattice. 

Specifically, the recovery of short generators of principal ideals in cyclotomic fields was studied in \cite{CDPR16}, where the length of ideal elements was measured by their length under the canonical embedding. Let $\mathcal{I}$ be a principal ideal in some number field $K$ with ring of integers $\mathcal{O}_K$. If we denote by $g$ a generator of $\mathcal{I}$, then solving the shortest generator problem (as in the setting of \cite{CDPR16}) corresponds to finding a unit $u \in \mathcal{O}_K^*$ such that
\begin{align*}
    \trace_{K/\mathbb{Q}}(auu^*)=\mu^*(a),
\end{align*}
where $a=gg^*$, and solving the shortest vector problem in $\mathcal{I}$ corresponds to finding $x \in \mathcal{O}_K$ such that
\begin{align*}
    \trace_{K/\mathbb{Q}}(axx^*)=\mu(a).
\end{align*}
Though the reduction discrepancy was not formally defined, it was shown that for any prime power $M$,
\begin{align*}
    \exp\left(\Omega\left(\frac{\sqrt{M}}{h^\prime \log(M)}\right)\right)\leq \delta_{K_M} \leq \phi(M)\exp(O(\sqrt{M \log(M)}),
\end{align*}
where $h^\prime={h^+}^{\frac{1}{\varphi(M)-1}}$, and $h^+$ is the class number of the maximal totally real subfield of $K_M$. These seem to be the best asymptotic bounds for large values of $M$.

When the field has a special ``unit reducible" property, the smallest quadratic form value also gives the shortest generator of the ideal. Although this connection breaks down in high dimensions as shown above, it can well hold in low dimensions. Our ultimate goal is to find a full list of cyclotomic fields which are unit reducible. Meanwhile, studying unit reducible fields in low dimensions could lead to lattice reduction algorithms in high dimensions (since the former can be used as a subroutine of the latter), an active research area \cite{LPL23,euclidean,lllmodules}. In summary, the unit reducible property connects the SVP to ideal generators in certain cyclotomic fields, with implications for cryptanalysis.

\subsection{Contributions}
The main results of the paper are as follows.
\begin{theorem}\label{1}
    For any number field $K$ such that $K/\mathbb{Q}$ is Galois, let
    \begin{align*}
        \mathcal{S}=\{x \in \mathcal{O}_K: |\norm_{K/\mathbb{Q}}(x)| \geq 2\} =\mathcal{O}_K\setminus \{\mathcal{O}_K^*\cup \{0\}\},
    \end{align*}
    and
    \begin{align*}
        \eta_K=\min_{x \in \mathcal{S}} |\norm_{K/\mathbb{Q}}(x)|.
    \end{align*}
    If $\gamma_K<\eta_K^{2/[K:\mathbb{Q}]}[K:\mathbb{Q}]$, then $K$ is strongly unit reducible.
\end{theorem}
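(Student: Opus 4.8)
The plan is to bound the unary form $\trace_{K/\mathbb{Q}}(axx^*)$ from below by an expression involving only the field norms $\norm_{K/\mathbb{Q}}(a)$ and $\norm_{K/\mathbb{Q}}(x)$, and then to pit this lower bound against the Hermite--Humbert upper bound on $\mu(a)$. First I would fix a totally positive $a$ and any nonzero $x \in \mathcal{O}_K$, writing the form through the embeddings as $\trace_{K/\mathbb{Q}}(axx^*) = \sum_{i=1}^n \sigma_i(a)|\sigma_i(x)|^2$, where $n = [K:\mathbb{Q}]$ and I use $\sigma_i(xx^*) = |\sigma_i(x)|^2$ (valid uniformly, since $x^* = x$ in the totally real case and $\sigma_i(x^*) = \overline{\sigma_i(x)}$ in the totally complex case). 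All $n$ summands are strictly positive because $a$ is totally positive and each embedding is injective, so no $\sigma_i(x)$ vanishes.

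The key step is to apply the AM--GM inequality to these $n$ positive terms:
\begin{align*}
    \trace_{K/\mathbb{Q}}(axx^*) \geq n\left(\prod_{i=1}^n \sigma_i(a)|\sigma_i(x)|^2\right)^{1/n} = n\,\norm_{K/\mathbb{Q}}(a)^{1/n}\,|\norm_{K/\mathbb{Q}}(x)|^{2/n},
\end{align*}
where I have used $\prod_i \sigma_i(a) = \norm_{K/\mathbb{Q}}(a)$ and $\prod_i |\sigma_i(x)|^2 = |\norm_{K/\mathbb{Q}}(x)|^2$. Restricting now to $x \in \mathcal{S}$, the definition of $\eta_K$ gives $|\norm_{K/\mathbb{Q}}(x)| \geq \eta_K$, so that $\trace_{K/\mathbb{Q}}(axx^*) \geq n\,\eta_K^{2/n}\,\norm_{K/\mathbb{Q}}(a)^{1/n}$.

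On the other side, the definition of the Hermite--Humbert constant yields $\mu(a) \leq \gamma_K\,\norm_{K/\mathbb{Q}}(a)^{1/n}$ for every $a$. Chaining the two estimates together with the hypothesis $\gamma_K < \eta_K^{2/n}\,n$ gives, for every $x \in \mathcal{S}$,
\begin{align*}
    \mu(a) \leq \gamma_K\,\norm_{K/\mathbb{Q}}(a)^{1/n} < n\,\eta_K^{2/n}\,\norm_{K/\mathbb{Q}}(a)^{1/n} \leq \trace_{K/\mathbb{Q}}(axx^*),
\end{align*}
so no element of $\mathcal{S}$ can attain the lattice minimum $\mu(a)$. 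Since $\mu(a) > 0$ excludes $0$ from $\mathcal{M}(a)$, and $\mathcal{O}_K = \mathcal{O}_K^* \cup \mathcal{S} \cup \{0\}$, every minimizer must lie in $\mathcal{O}_K^*$; that is, $\mathcal{M}(a) \subset \mathcal{O}_K^*$, and as $a$ was arbitrary this is exactly strong unit reducibility.

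I expect the argument to be essentially routine once these two bounds are assembled; the points demanding care are the verification that $\prod_i |\sigma_i(x)|^2 = |\norm_{K/\mathbb{Q}}(x)|^2$ holds in both the totally real and totally complex settings, and confirming that $\eta_K$ is well defined (the set $\mathcal{S}$ is nonempty, as any rational prime $p$ lies in it with $|\norm_{K/\mathbb{Q}}(p)| = p^n$). The one genuinely substantive feature, rather than an obstacle, is that the \emph{strictness} of the field hypothesis is precisely what forces $\mu(a) < \trace_{K/\mathbb{Q}}(axx^*)$ to be strict for $x \in \mathcal{S}$, and it is this strictness that upgrades mere unit reducibility to the strong form.
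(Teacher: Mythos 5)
Your proposal is correct and follows essentially the same route as the paper's proof: the Hermite--Humbert bound $\mu(a) \leq \gamma_K \norm_{K/\mathbb{Q}}(a)^{1/n}$ pitted against the AM--GM lower bound $\trace_{K/\mathbb{Q}}(axx^*) \geq n\,\norm_{K/\mathbb{Q}}(a)^{1/n}|\norm_{K/\mathbb{Q}}(x)|^{2/n}$, forcing any minimizer to have norm below $\eta_K$ and hence be a unit. The only difference is presentational (you argue directly that elements of $\mathcal{S}$ cannot minimize, while the paper assumes a minimizer and bounds its norm), and your write-up is if anything slightly more careful about absolute values on the norm.
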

\begin{theorem}\label{2}
    For any integers $M,N$ such that $N \mid M$, if $K_N$ is not unit reducible, the cyclotomic fields $K_M$ is also not unit reducible. Moreover, $\delta_{K_M} \geq \delta_{K_N}$.
\end{theorem}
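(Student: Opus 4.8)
The plan is to derive the first assertion from the second, and to prove $\delta_{K_M}\ge\delta_{K_N}$ by a pointwise comparison of $\mu$ and $\mu^*$ along the inclusion $K_N\subseteq K_M$ (valid precisely because $N\mid M$). First I would record the equivalence ``$K$ is unit reducible $\iff \delta_K=1$'': one direction is the implication already noted in the excerpt, and the converse is immediate, since $\mathcal{O}_K^*\subseteq\mathcal{O}_K\setminus\{0\}$ forces $\mu^*(a)\ge\mu(a)$, so $\delta_K=1$ is equivalent to $\mu^*(a)=\mu(a)$ for all $a$. Consequently, once $\delta_{K_M}\ge\delta_{K_N}$ is established, $K_N$ not unit reducible gives $\delta_{K_M}\ge\delta_{K_N}>1$, whence $K_M$ is not unit reducible.

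Next I would reduce the monotonicity to a pointwise statement. Since $\mu(auu^*)=\mu(a)$ and $\mu^*(auu^*)=\mu^*(a)$ for every $u\in\mathcal{O}_K^*$, the ratio $\mu^*(a)/\mu(a)$ is constant on equivalence classes, so $\delta_K=\sup_{a}\mu^*(a)/\mu(a)$ may be computed over any set of class representatives. As every $a\in (K_N)_{>>0}$ is totally positive in $K_M$ as well (each embedding of $K_M$ restricts to one of $K_N$), we have $(K_N)_{>>0}\subseteq (K_M)_{>>0}$, and it suffices to prove, for each fixed $a\in (K_N)_{>>0}$,
\begin{align*}
    \frac{\mu^*_{K_M}(a)}{\mu_{K_M}(a)}\ \ge\ \frac{\mu^*_{K_N}(a)}{\mu_{K_N}(a)},
\end{align*}
and then take the supremum over $(K_N)_{>>0}\subseteq (K_M)_{>>0}$.

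Write $d=[K_M:K_N]$ and use $\trace_{K_M/\mathbb{Q}}=\trace_{K_N/\mathbb{Q}}\circ\trace_{K_M/K_N}$ together with $\trace_{K_M/K_N}|_{K_N}=d$. Feeding elements of $\mathcal{O}_{K_N}$ into the two $K_M$-forms yields the easy upper bounds $\mu_{K_M}(a)\le d\,\mu_{K_N}(a)$ and $\mu^*_{K_M}(a)\le d\,\mu^*_{K_N}(a)$; moreover a minimizer $x_0\in\mathcal{O}_{K_N}$ of $\mu_{K_N}(a)$ stays a non-unit in $\mathcal{O}_{K_M}$, because $|\norm_{K_M/\mathbb{Q}}(x_0)|=|\norm_{K_N/\mathbb{Q}}(x_0)|^{d}\ge 2$. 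The pointwise inequality will therefore follow once I prove the matching lower bound
\begin{align*}
    \mu^*_{K_M}(a)\ \ge\ d\,\mu^*_{K_N}(a),
\end{align*}
equivalently $\mu^*_{K_M}(a)=d\,\mu^*_{K_N}(a)$: that for $a\in K_N$ the trace-minimal unit may be chosen inside $\mathcal{O}_{K_N}^*$. Combining this with $\mu_{K_M}(a)\le d\,\mu_{K_N}(a)$ gives exactly the displayed ratio inequality.

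The main obstacle is precisely this lower bound on $\mu^*_{K_M}$. Since $a$ is totally positive it lies in the maximal real subfield $K_N^+$, and for $u\in\mathcal{O}_{K_M}^*$ one has $uu^*\in K_M^+$ with
\begin{align*}
    \trace_{K_M/\mathbb{Q}}(auu^*)=\sum_{\sigma\colon K_N\hookrightarrow\mathbb{C}}\sigma(a)\sum_{\tau\mid\sigma}|\tau(u)|^2 .
\end{align*}
I would attack the bound using the cyclotomic unit structure (the unit group being generated by roots of unity and the units of $K_M^+$, with controlled Hasse index) together with the fact that complex conjugation commutes with $\mathrm{Gal}(K_M/K_N)$, since the Galois group is abelian; thus all $\mathrm{Gal}(K_M/K_N)$-conjugates of an optimal $u$ are again optimal and $\prod_{\tau\mid\sigma}|\tau(u)|^2=|\sigma(\norm_{K_M/K_N}(u))|^2$ with $\norm_{K_M/K_N}(u)\in\mathcal{O}_{K_N}^*$. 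The difficulty is that the obvious convexity (AM--GM) step on each fibre $\{|\tau(u)|^2\}_{\tau\mid\sigma}$ only controls the geometric mean, producing exponents $2/d$ rather than the trace itself and losing the constant $d$; carrying out the replacement of $u$ by a genuine element of $\mathcal{O}_{K_N}^*$ without increasing the trace is the delicate point, and is exactly where the cyclotomic (as opposed to arbitrary Galois) hypothesis must enter.
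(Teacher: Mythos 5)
Your reduction is organized correctly, and everything surrounding it is fine: the equivalence between unit reducibility and $\delta_K=1$, the containment $(K_N)_{>>0}\subseteq (K_M)_{>>0}$, the upper bounds $\mu_{K_M}(a)\le d\,\mu_{K_N}(a)$ and $\mu^*_{K_M}(a)\le d\,\mu^*_{K_N}(a)$, and the persistence of non-units under lifting. But the single statement you reduced everything to, $\mu^*_{K_M}(a)\ge d\,\mu^*_{K_N}(a)$, is precisely the content of the theorem, and your proposal ends by conceding that you cannot prove it. Moreover, the tools you propose for it cannot succeed: averaging an optimal unit $u$ over $\mathrm{Gal}(K_M/K_N)$ and passing to $\norm_{K_M/K_N}(u)\in\mathcal{O}_{K_N}^*$ only controls $\prod_{\tau\mid\sigma}|\tau(u)|^2$, the geometric mean over each fibre, and (as you yourself note) AM--GM then bounds the fibre sum below by a quantity with exponent $2/d$ that does not compare to any trace-form value over $K_N$. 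Diagnosing where the argument breaks is not the same as supplying the missing step, so this is a genuine gap.

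The idea you are missing, and which an embedding-based toolkit never touches, is the integral module structure of $\mathcal{O}_{K_M}$ over $\mathcal{O}_{K_N}$. The paper first takes $M/N$ to be a prime power, $N=sp^n$, $M=sp^m$ (the general case then follows by induction over the primes dividing $M/N$), writes any $y\in\mathcal{O}_{K_M}$ as $y=\sum_{i=0}^{p^{m-n}-1}x_i'\zeta_M^i$ with $x_i\in\mathcal{O}_{K_N}$, and uses $\trace_{K_M/K_N}(\zeta_M^{i-j})=p^{m-n}$ for $i=j$ and $0$ otherwise to split the form orthogonally:
\begin{align*}
\trace_{K_M/\mathbb{Q}}(a'yy^*)=p^{m-n}\sum_{i=0}^{p^{m-n}-1}\trace_{K_N/\mathbb{Q}}(ax_ix_i^*).
\end{align*}
This identity is what makes units of $K_M$ comparable to data over $K_N$: every nonzero coordinate contributes at least $p^{m-n}\mu_{K_N}(a)$, so $\mu_{K_M}(a')=p^{m-n}\mu_{K_N}(a)$; and a unit $u\in\mathcal{O}_{K_M}^*$ either is supported on a single coordinate, in which case $u=\zeta_M^i x_i'$ with $x_i\in\mathcal{O}_{K_N}^*$ (its norm down to $\mathbb{Q}$ is $\pm1$) and $\trace_{K_M/\mathbb{Q}}(a'uu^*)\ge p^{m-n}\mu^*_{K_N}(a)$, or it is supported on at least two coordinates and $\trace_{K_M/\mathbb{Q}}(a'uu^*)\ge 2p^{m-n}\mu_{K_N}(a)$; either way the value strictly exceeds $\mu_{K_M}(a')$, which already gives the first assertion. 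The paper then obtains $\delta_{K_M}\ge\delta_{K_N}$ by arranging the counterexample so that its lift is reduced in $K_M$ (its ``without loss of generality $a'\in\mathcal{F}_{K_M}$'' step) and comparing $\trace_{K_M/\mathbb{Q}}(a')$ with $\trace_{K_M/\mathbb{Q}}(a'x'{x'}^*)$ for the lifted non-unit minimizer $x'$. Your unproven lemma is essentially that reducedness-transfer statement, so you did isolate the right crux; what is missing is the coordinate decomposition that makes it attackable, since no argument working only with the archimedean values $|\tau(u)|$ can detect the dichotomy between units lying in a single $\mathcal{O}_{K_N}$-coordinate and units spread over several, and that dichotomy is the proof.
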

We also find a more concrete lower bound for $\delta_{K_M}$ when $M$ is a prime power which, whilst asymptotically worse than that in \cite{CDPR16}, provides us a framework by which we can deduce whether or not such fields are unit reducible.
\begin{theorem}\label{3}
    The field $K_N$ is unit reducible for $N=3,4,5,7,8,9,12,15$. Of these, the fields of conductors $N=3,4,5,7,12,15$ are strongly unit reducible, whilst the fields of conductors $N=8,9$ are weakly unit reducible. Moreover, for all integers $n \geq 1$ and odd primes $p \geq 3$,
    \begin{align*}
        &\delta_{K_{2^n}} \geq 2^{n-3},
        \\& \delta_{K_{p^n}} \geq p^{n-1}\frac{p+1}{12}.
    \end{align*}
    
\end{theorem}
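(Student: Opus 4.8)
The plan is to dispatch the unit-reducibility claims through Theorem \ref{1} and to establish the lower bounds on $\delta$ by exhibiting a single explicit totally positive element built from the ramified prime $(1-\zeta)$. For each conductor $N\in\{3,4,5,7,12,15\}$ I would apply Theorem \ref{1}, so the task reduces to two computations per field. First, compute $\eta_{K_N}$, the least norm of a non-unit: since the norm of a prime ideal above a rational prime $q$ is $q^{f}$ with $f$ the order of $q$ modulo $N$ (ramification-adjusted), $\eta_{K_N}$ is the smallest such prime power, typically realised at a ramified prime. Second, bound the Hermite--Humbert constant $\gamma_{K_N}$ from above, either by tabulated values for these low-degree CM fields or by a Minkowski estimate of the shape $\gamma_{\phi(N)}\,|d_{K_N}|^{1/\phi(N)}$ (up to normalisation), coming from the lattice $(\mathcal{O}_{K_N},\trace_{K/\mathbb{Q}}(a\,xx^*))$ of determinant $|d_{K_N}|\,\norm_{K/\mathbb{Q}}(a)$. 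One then verifies the strict inequality $\gamma_{K_N}<\phi(N)\,\eta_{K_N}^{2/\phi(N)}$ case by case, which yields strong unit reducibility.

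For $N=8,9$ the strict inequality of Theorem \ref{1} just fails, so these require a direct argument, and I expect the very element used for the lower bounds to do double duty: set $a=1/\bigl((1-\zeta_N)(1-\bar\zeta_N)\bigr)$, a totally positive real element of the maximal real subfield. As explained below, $a$ is \emph{reduced} and $\mu(a)=\mu^*(a)=\phi(N)$, yet $\mu(a)$ is attained at the \emph{non-unit} $x_0=1-\zeta_N$, so $\mathcal{M}(a)\not\subset\mathcal{O}_{K_N}^*$, witnessing weak rather than strong unit reducibility. Full unit reducibility $\mu(b)=\mu^*(b)$ for every totally positive $b$ would then be finished via the boundary (equality) case of the Theorem \ref{1} analysis together with a finite check. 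This is the same place where the lower bound of Theorem \ref{3} returns value exactly $1$, so the weak cases sit precisely on the boundary of the general construction.

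For the bounds on $\delta_{K_{p^n}}$ I would take $a=|1-\zeta_{p^n}|^{-2}=1/\bigl((1-\zeta)(1-\bar\zeta)\bigr)$. Two facts are immediate. Plugging in the non-unit $x_0=1-\zeta$ gives $\trace_{K/\mathbb{Q}}(a\,x_0x_0^*)=\trace_{K/\mathbb{Q}}(1)=\phi(p^n)$, hence $\mu(a)\le\phi(p^n)$. And since the embeddings $\sigma_k(a)=1/|1-\zeta^k|^2=1/\bigl(4\sin^2(\pi k/p^n)\bigr)$ run over the primitive residues $k$, the classical identity $\sum_{k=1}^{m-1}\csc^2(\pi k/m)=(m^2-1)/3$, combined with inclusion--exclusion to remove the terms with $p\mid k$, evaluates $\trace_{K/\mathbb{Q}}(a)=\tfrac{p^{2n}-1}{12}-\tfrac{p^{2n-2}-1}{12}=\tfrac{p^{2n-2}(p^2-1)}{12}$, so that $\trace_{K/\mathbb{Q}}(a)/\phi(p^n)=p^{n-1}(p+1)/12$; the same formula specialised at $p=2$ returns $2^{n-3}$, unifying both stated bounds.

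It remains to prove that $a$ is reduced, i.e.\ $\trace_{K/\mathbb{Q}}(a\,uu^*)=\sum_k\sigma_k(a)|\sigma_k(u)|^2\ge\trace_{K/\mathbb{Q}}(a)$ for every $u\in\mathcal{O}_K^*$, with minimum at $u=1$; this is the main obstacle. The function $u\mapsto\sum_k\sigma_k(a)|\sigma_k(u)|^2$ is convex in $\Log(u)$ and its continuous minimiser is \emph{not} at the origin, so AM--GM alone is insufficient and the discreteness of $\mathcal{O}_K^*$ must be used. A promising tool is the Galois-averaging identity $\sum_{\tau\in G}\trace_{K/\mathbb{Q}}(\tau(a)\,uu^*)=\trace_{K/\mathbb{Q}}(a)\,\trace_{K/\mathbb{Q}}(uu^*)\ge\phi(p^n)\,\trace_{K/\mathbb{Q}}(a)$, where the inequality is AM--GM applied to $\trace_{K/\mathbb{Q}}(uu^*)$; this shows the desired bound holds \emph{on average} over the Galois orbit of $u$. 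Pushing it to every individual $u$ should exploit that $a$ is built from $1-\zeta$ and is therefore compatible with the cyclotomic unit lattice, whose geometry is explicit. Granting reducedness, $\delta_{K_{p^n}}\ge\mu^*(a)/\mu(a)\ge\trace_{K/\mathbb{Q}}(a)/\phi(p^n)=p^{n-1}(p+1)/12$, and the $p=2$ specialisation gives $\delta_{K_{2^n}}\ge2^{n-3}$.
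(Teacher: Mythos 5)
Your overall strategy coincides with the paper's: Theorem \ref{1} together with the bound $\gamma_K\leq\gamma_n|\Delta_K|^{1/n}$ disposes of $N=3,4,5,7,12,15$; the equality (boundary) case of Theorem \ref{1} plus an explicit check handles $N=8,9$; and the lower bounds come from the single totally positive element $a=\bigl((1-\zeta_{p^n})(1-\zeta_{p^n}^{-1})\bigr)^{-1}$. Your trace evaluation $\trace_{K/\mathbb{Q}}(a)=p^{2(n-1)}(p^2-1)/12$, the bound $\mu(a)\leq\trace_{K/\mathbb{Q}}(a x_0x_0^*)=\phi(p^n)$ via $x_0=1-\zeta_{p^n}$, and the resulting ratios agree exactly with the paper (which uses the associate $1+\zeta_{2^n}$ of $1-\zeta_{2^n}$ in the even case, an equivalent form).

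However, there is a genuine gap, and you have located it yourself: the reducedness of $a$, i.e. $\trace_{K/\mathbb{Q}}(auu^*)\geq\trace_{K/\mathbb{Q}}(a)$ for \emph{every} unit $u$, is never proved, and everything after ``Granting reducedness'' is conditional on it. Your Galois-averaging identity cannot close this gap even in principle: since $\trace_{K/\mathbb{Q}}(\tau(a)uu^*)=\trace_{K/\mathbb{Q}}\bigl(a\,\tau^{-1}(u)\tau^{-1}(u)^*\bigr)$, averaging over $\tau$ only shows that in each Galois orbit of units \emph{at least one} conjugate $v$ of $u$ satisfies $\trace_{K/\mathbb{Q}}(avv^*)\geq\trace_{K/\mathbb{Q}}(a)$; it says nothing about $u$ itself, and the inequality could fail at $u$ while being compensated at its conjugates. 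The paper closes this step with a concrete lattice argument: any $x\in\mathcal{O}_K$ not divisible by $1-\zeta_{p^n}$ satisfies $x/(1-\zeta_{p^n})=y+\alpha\zeta_{p^n}^{k}/(1-\zeta_{p^n})$ with $y\in\mathcal{O}_K$ and $\alpha\not\equiv 0 \bmod p$; in the power basis the form $\rho_{p^n}(z)=\trace_{K/\mathbb{Q}}(zz^*)$ splits into $p^{n-1}$ orthogonal copies of the $(p-1)$-ary form $Q(m_1,\dots,m_{p-1})=(p-1)\sum_i m_i^2-2\sum_{i<j}m_im_j$, and Lenstra's inequality (reference [L75] of the paper) states precisely that the fractional vector $\bigl(r(p-1)/p,\dots,r(1)/p\bigr)$ minimizes $Q$ over its coset modulo $\mathbb{Z}^{p-1}$ for every permutation $r$. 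This gives $\rho_{p^n}\bigl(x/(1-\zeta_{p^n})\bigr)\geq\rho_{p^n}\bigl(1/(1-\zeta_{p^n})\bigr)$ for all such $x$ --- in particular all units --- which is the reducedness you need; for $p=2$ the analogue is the elementary fact that coefficients $\pm 1/2$ cannot be moved closer to integers. Without this lemma or a substitute, the lower bounds on $\delta_{K_{p^n}}$ and $\delta_{K_{2^n}}$ are unproven. A secondary, smaller omission: for $N=8,9$ your ``finite check'' that $\mu(a)=\trace_{K/\mathbb{Q}}(a)$ is left unspecified (the paper performs an explicit HKZ reduction of the boundary form), and weak unit reducibility requires both that computation and the identification, via the equality case of the AM--GM step, of the boundary form as the unique possible failure up to equivalence.
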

An immediate consequence of Theorems \ref{2} and \ref{3} are the following corollaries, the second of which we will prove, whilst the first is trivial.
\begin{corollary}
    The field $K_N$ is not unit reducible if $2^4, 3^3, 5^2, 7^2, 11^2$ or any prime $p \geq 13$ divides $N$.
\end{corollary}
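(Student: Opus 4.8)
The plan is to reduce everything to the two preceding theorems together with the single observation that a field with $\delta_K > 1$ cannot be unit reducible. Indeed, if $K$ is unit reducible then $\mu(a) = \mu^*(a)$ for every $a \in K_{>>0}$, which forces $\delta_K = \sup_a \mu^*(a)/\mu(a) = 1$; contrapositively, a strict inequality $\delta_K > 1$ certifies that $K$ is not unit reducible. So it suffices to exhibit, for each prime power $q$ in the list $\{2^4, 3^3, 5^2, 7^2, 11^2\}$ and for each prime $p \geq 13$, the bound $\delta_{K_q} > 1$ (respectively $\delta_{K_p} > 1$), and then propagate it upward to all multiples via Theorem~\ref{2}.

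First I would substitute each prime power into the lower bounds of Theorem~\ref{3}. For the power of two, $\delta_{K_{2^4}} \geq 2^{4-3} = 2$. For the odd prime powers, the bound $\delta_{K_{p^n}} \geq p^{n-1}(p+1)/12$ gives
\begin{align*}
    \delta_{K_{3^3}} \geq 9 \cdot \tfrac{4}{12} = 3, \qquad
    \delta_{K_{5^2}} \geq 5 \cdot \tfrac{6}{12} = \tfrac{5}{2}, \qquad
    \delta_{K_{7^2}} \geq 7 \cdot \tfrac{8}{12} = \tfrac{14}{3}, \qquad
    \delta_{K_{11^2}} \geq 11 \cdot \tfrac{12}{12} = 11,
\end{align*}
while for a prime $p \geq 13$ the case $n = 1$ yields $\delta_{K_p} \geq (p+1)/12 \geq 14/12 > 1$. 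Each of these quantities strictly exceeds $1$, so by the observation above every one of the fields $K_{2^4}, K_{3^3}, K_{5^2}, K_{7^2}, K_{11^2}$ and $K_p$ (for $p \geq 13$) fails to be unit reducible.

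Finally I would invoke Theorem~\ref{2}. If $q$ is any one of these prime powers (or $q = p$ a prime $\geq 13$) and $q \mid N$, then $K_q \subseteq K_N$ by the standard correspondence between conductor divisibility and subfield inclusion, and the theorem gives $\delta_{K_N} \geq \delta_{K_q} > 1$. Hence $K_N$ is not unit reducible, which is exactly the conclusion sought for each divisibility condition in the statement.

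At the level of the corollary there is essentially no obstacle, since all the analytic content resides in Theorems~\ref{2} and~\ref{3}; the corollary is a bookkeeping step. The only point deserving care is the sharpness of the list: one should note that the companion evaluations for $2^3$, $3^2$, and the linear prime $11$ all equal exactly $1$ (namely $2^{3-3}=1$, $3 \cdot \tfrac{4}{12}=1$, and $(11+1)/12 = 1$), so these bounds do not certify non-unit-reducibility. This is precisely why the thresholds sit at $2^4$, $3^3$, and $11^2$, and why $p = 11$ must be excluded from the linear-prime case while every $p \geq 13$ is included.
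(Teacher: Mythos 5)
Your proposal is correct and is exactly the argument the paper has in mind (the paper omits the proof, calling the corollary a trivial consequence of Theorems \ref{2} and \ref{3}): substitute the smallest offending prime powers into the lower bounds of Theorem \ref{3} to get $\delta_{K_q}>1$, note that $\delta_K>1$ rules out unit reducibility, and propagate to all multiples via Theorem \ref{2}. Your closing remark on sharpness --- that $2^3$, $3^2$, and $p=11$ give bounds exactly equal to $1$ --- is a nice check and matches the case analysis at the end of the paper's proof of Theorem \ref{3}.
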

\begin{corollary}\label{cor2}
    $\lim_{N \to \infty} \delta_{K_N}=\infty$.
\end{corollary}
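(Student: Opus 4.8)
The plan is to derive the divergence of $\delta_{K_N}$ entirely from the two tools already in hand: the monotonicity statement of Theorem \ref{2} and the explicit prime-power lower bounds of Theorem \ref{3}. The governing idea is that for any prime power $q$ dividing $N$ we have, by Theorem \ref{2} applied with $q$ in the role of $N$ and $N$ in the role of $M$, the inequality $\delta_{K_N} \geq \delta_{K_q}$. It therefore suffices to exhibit, for each large $N$, a prime-power divisor $q$ whose associated discrepancy $\delta_{K_q}$ is forced to be large by Theorem \ref{3}.

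The key observation, and the only genuinely non-mechanical step, is a purely arithmetic one. Writing $v_p(N)$ for the exponent of $p$ in the factorisation of $N$, set
\begin{align*}
    P(N) = \max\{\, p^{v_p(N)} : p \text{ prime},\ p \mid N \,\},
\end{align*}
the largest prime power occurring in $N$. I claim $P(N) \to \infty$ as $N \to \infty$. To see this, fix any bound $B$ and suppose $P(N) \leq B$. Then every prime dividing $N$ is at most $B$, so $N$ has at most $\pi(B)$ distinct prime factors, and each prime-power factor is at most $B$; hence $N \leq B^{\pi(B)}$. Thus only finitely many $N$ satisfy $P(N) \leq B$, which is precisely the statement that $P(N) \to \infty$ with $N$.

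With this in place the conclusion follows quickly. Write $q = P(N) = p^{e}$. If $p = 2$, Theorem \ref{3} gives $\delta_{K_q} \geq 2^{e-3} = q/8$; if $p$ is odd, it gives $\delta_{K_q} \geq p^{e-1}\tfrac{p+1}{12} = \tfrac{q}{12}\cdot\tfrac{p+1}{p} > \tfrac{q}{12}$. In either case $\delta_{K_q} \geq q/12 = P(N)/12$, and combining with $\delta_{K_N} \geq \delta_{K_{P(N)}}$ yields
\begin{align*}
    \delta_{K_N} \geq \frac{P(N)}{12} \longrightarrow \infty \quad \text{as } N \to \infty.
\end{align*}
The main obstacle to anticipate is not the estimate itself but the temptation to reason only through exponents: a large $N$ need not have any large exponent (it may be squarefree), so the monotonicity of Theorem \ref{2} is not obviously enough on its own. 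The resolution is exactly the finiteness argument above, which shows that boundedness of the largest prime-power divisor forces $N$ itself to be bounded, so some prime-power divisor of $N$ must grow with $N$ and can be fed into Theorem \ref{3}.
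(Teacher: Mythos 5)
Your proof is correct and follows essentially the same route as the paper: both combine the divisibility monotonicity of Theorem \ref{2} with the prime-power lower bounds of Theorem \ref{3}, together with the arithmetic observation that bounding all prime-power divisors of $N$ bounds $N$ itself (the paper phrases this as finiteness of $\{M : \delta_{K_M} \leq \delta\}$ by bounding the primes and exponents separately, while you package it as $P(N) \to \infty$ and the explicit bound $\delta_{K_N} \geq P(N)/12$, but the content is identical).
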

\section{Unit Reducible Cyclotomic Fields}
Throughout the paper, we will use $\Delta_K$ to denote the discriminant of a number field $K$.
\begin{proof}[Proof of Theorem \ref{1}]
    By the definition of $\gamma_K$, every totally positive element $a \in K_{>>0}$,
    \begin{align*}
        \mu(a) \leq \gamma_K \norm_{K/\mathbb{Q}}(a)^{\frac{1}{n}},
    \end{align*}
    where $n=[K:\mathbb{Q}]$. Suppose that $\trace_{K/\mathbb{Q}}(axx^*)=\mu(a)$ for some nonzero $x \in \mathcal{O}_K$. By the arithmetic-geometric inequality,
    \begin{align*}
        n\norm_{K/\mathbb{Q}}(a)^{\frac{1}{n}}\norm_{K/\mathbb{Q}}(x)^{\frac{2}{n}} = n\norm_{K/\mathbb{Q}}(axx^*)^{\frac{1}{n}} \leq \trace_{K/\mathbb{Q}}(axx^*)=\mu(a) \leq \gamma_K\norm_{K/\mathbb{Q}}(a)^{\frac{1}{n}}.
    \end{align*}
    Rearranging gives
    \begin{align*}
        \norm_{K/\mathbb{Q}}(xx^*)^{\frac{2}{n}} \leq \frac{\gamma_K}{n}.
    \end{align*}
    If $n\eta_K^{\frac{2}{n}}>\gamma_K$, clearly the only nonzero elements of $\mathcal{O}_K$ that any totally positive $a \in K_{>>0}$ can attain their trace-minimum at must be units, which completes the proof.
\end{proof}
\begin{proof}[Proof of Theorem \ref{2}]
    Consider the field $K_N$ for some integer $N \geq 3$, and assume that it is not unit reducible. Then there exists an $a \in {K_N}_{>>0}$ such that $\mathcal{M}(a) \subset \mathcal{S}$, i.e. $\mu(a) <\mu^*(a)$. Let $N=sp^n$ for some $n \geq 0$ where $p$ is a prime number, and $\gcd(p,s)=1$. Now consider the field $K_M$, where $M=sp^m$ and $m \geq n \geq 0$. For any $y \in \mathcal{O}_{K_M}$, we can express $y$ as
    \begin{align}
        y=\sum_{i=0}^{p^{m-n}-1} x_i^\prime \zeta_M^i,
    \end{align}
    where $x_i \in \mathcal{O}_{K_N}$ and $x_i^\prime$ is the inclusion of $x_i$ into $K_M$. Let $a^\prime$ denote the element $a \in {K_N}_{>>0}$ after being lifted into the field $K_M$. Clearly $a^\prime$ is still a totally positive element. Then
    \begin{align*}
        a^{\prime}yy^*=\sum_{i,j=0}^{p^{m-n}-1}a^\prime x_i^\prime {x_j^{\prime}}^* \zeta_M^{i-j}.
    \end{align*}
    Using the fact that
    \begin{align*}
        &\trace_{K_M/\mathbb{Q}}\left(\sum_{i,j=0}^{p^{m-n}-1}a^\prime x_i^\prime {x_j^\prime}^* \zeta_M^{i-j}\right)=\sum_{i,j=0}^{p^{m-n}-1}\trace_{K_M/\mathbb{Q}}\left(a^\prime x_i^\prime {x_j^\prime}^* \zeta_M^{i-j}\right)
        \\ & = \sum_{i,j=0}^{p^{m-n}-1}\trace_{K_N/\mathbb{Q}}\left(a^\prime x_i^\prime {x_j^\prime}^* \ \trace_{K_M/K_N}(\zeta_M^{i-j})\right), \qquad\trace_{K_M/K_N}(\zeta_M^{i-j})=\begin{cases}
    p^{m-n} \hspace{2mm} &\text{if} \hspace{1mm} i=j,\\
    0 \hspace{2mm} &\text{otherwise.}
    \end{cases},
\end{align*}
    it follows that
    \begin{align}
        \trace_{K_M/\mathbb{Q}}(a^\prime yy^*)=p^{m-n}\sum_{i=0}^{p^{m-n}-1}\trace_{K_N/\mathbb{Q}}(ax_ix_i^*). \label{4}
    \end{align}
    We may assume without loss of generality that $a^\prime \in \mathcal{F}_{K_M}$, so
    \begin{align*}
        \trace_{K_M/\mathbb{Q}}(a^\prime) \leq \trace_{K_M/\mathbb{Q}}(a^\prime uu^*),
    \end{align*}
    for any $u \in \mathcal{O}_{K_M}^*$. However, by the assumptions made on $a$, there exists an element $x \in \mathcal{O}_{K_N}$ such that $x$ is not a unit or zero, and
    \begin{align*}
        \trace_{K_N/\mathbb{Q}}(avv^*) >\trace_{K_N/\mathbb{Q}}(axx^*),
    \end{align*}
    for all $v \in \mathcal{O}_{K_N}^*$. Therefore,
    \begin{align*}
        \trace_{K_M/\mathbb{Q}}(a^\prime)=p^{m-n}\trace_{K_N/\mathbb{Q}}(a)>p^{m-n}\trace_{K_N/\mathbb{Q}}(axx^*)=\trace_{K_M/\mathbb{Q}}(a^\prime x^\prime {x^\prime}^*),
    \end{align*}
    where $x^\prime$ is the element $x$ after being lifted into $K_M$, which is not a unit. Thus $\mu^*(a^\prime)>\mu(a^\prime)$, and so
    \begin{align*}
        \delta_{K_M} \geq \frac{\mu^*(a^\prime)}{\mu(a^\prime)} \geq \frac{\trace_{K_M/\mathbb{Q}}(a^\prime)}{\trace_{K_M/\mathbb{Q}}(a^\prime x^\prime {x^\prime}^*)}>1.
    \end{align*}

    So far we have shown that if $N=sp^n, M=sp^m$ with $m \geq n \geq 0$ and $\gcd(s,p)=1$, $K_N$ not being unit reducible implies that $K_M$ is not unit reducible. We now want to focus on the more general case where $N \mid M$. Let $M=\prod_{i=1}^{t} p_i^{e_i}$, where $p_i$ are prime and $e_i \geq 0$. Then if $N \mid M$, $N=\prod_{i=1}^{t}p_i^{f_i}$ where $e_i \geq f_i \geq 0$. However, it can be deduced that $K_M$ will not be unit reducible if $K_N$ is not unit reducible, since any cyclotomic field of conductor $N^\prime = p_j^{f_j+k} \prod_{i=1: i \neq j}^t p_i^{f_i}$ for any $1 \leq j \leq t$, $k \geq 0$ will also not be unit reducible, and so applying this argument inductively, the cyclotomic field of conductor $M$ is not unit reducible. The inequality $\delta_{K_M} \geq \delta_{K_N}$ in this case follows similarly.
\end{proof}
\begin{proof}[Proof of Theorem \ref{3}]
\begin{table}[]
\centering
\begin{tabular}{|l|l|l|l|}
\hline
\textbf{Value of $N$} & \textbf{$\eta_{K_N}$} & \textbf{$\gamma_n$ where $n=[K:\mathbb{Q}]$} & \textbf{$|\Delta_{K_N}|^{1/n}$} \\ \hline
$5$                   & $5$                   & $\sqrt{2}$                                   & $5^{3/4}$                       \\ \hline
$7$                   & $7$                   & $\left(\frac{64}{3}\right)^{1/6}$            & $7^{5/6}$                       \\ \hline
$8$                   & $2$                   & $\sqrt{2}$                                   & $4$                             \\ \hline
$9$                   & $3$                   & $\left(\frac{64}{3}\right)^{1/6}$            & $3^{3/2}$                       \\ \hline
$12$                  & $4$                   & $\sqrt{2}$                                   & $2\sqrt{3}$                     \\ \hline
$15$                  & $16$                  & $2$                                          & $5^{3/4}\sqrt{3}$               \\ \hline
\end{tabular}
\captionof{table}{A table of values for constants associated to $K_N$.}\label{test}
\end{table}
    Throughout the first part of the proof, we will make use of the values obtained in Table \ref{test}. The cyclotomic fields of conductors $3,4$ are trivially unit reducible. Note that by Theorem 2.2 from \cite{L05}, $\gamma_K \leq \gamma_n |\Delta_K|^{1/n}$ for any field of degree $n$ over $\mathbb{Q}$, and so any field satisfying $\gamma_n |\Delta_K|^{1/n}< n \eta_K^{1/n}$ is strongly unit reducible by Theorem \ref{1}. Referring to Table \ref{test}, we can immediately see that the strict inequality holds for the fields $K_N$ where $N=5,7,12,15$ which implies these fields are strongly unit reducible.
    
    When $N=8$, since $\eta_{K_8}=2$ we have $\gamma_4|\Delta_{K_8}|^{1/4}=4\eta_{K_8}^{1/2}$. However, note that Theorem \ref{1} fails to prove that $K_8$ is unit reducible in this case only if we have $[K_8:\mathbb{Q}]\norm_{K_8/\mathbb{Q}}(axx^*)^{1/4}=\trace_{K_8/\mathbb{Q}}(axx^*)$, for which we would need $axx^*$ to be fixed by all the Galois conjugates, i.e. $axx^* \in \mathbb{Q}$. Since $|\norm_{K_8/\mathbb{Q}}(x)|=2$ is achieved when $x=1+\zeta_8$ (or an associate of this element, or a conjugate-associate of this element, though all conjugates of $1+\zeta_8$ are associates of $1+\zeta_8$), we let $a=q(1+\zeta_8)^{-1}(1+\zeta_8^*)^{-1}$, where $q$ is some positive rational number. Performing HKZ reduction on the real quadratic form $\trace_{K_8/\mathbb{Q}}(axx^*)$ yields $\mu(a)=\trace_{K_8/\mathbb{Q}}(a)=4q$, and so this form is unit reducible, so $K_8$ is unit reducible. However, $\trace_{K_8/\mathbb{Q}}(a)=\trace_{K_8/\mathbb{Q}}(a(1+\zeta_8)(1+\zeta_8)^*)=\mu(a)$, and since $1+\zeta_8$ is not a unit, this means $K_8$ is weakly unit reducible.
    
    Similarly when $N=9$ we have $\gamma_6|\Delta_{K_9}|^{1/6}=6\eta_{K_9}^{1/3}$, where $\eta_{K_9}=3$. Again, Theorem \ref{1} fails to prove that $K_9$ is unit reducible in this case only if we have $[K_9:\mathbb{Q}]\norm_{K_9/\mathbb{Q}}(axx^*)^{1/4}=\trace_{K_9/\mathbb{Q}}(axx^*)$, which occurs when $a=q/xx^*$ and $x=1+\zeta_9+\zeta_9^3$. Performing HKZ reduction on the quadratic form yields $\mu(a)=\trace_{K_9/\mathbb{Q}}(a)=6q$, and so this form is unit reducible, so $K_9$ is unit reducible. Again, though, $\trace_{K_9/\mathbb{Q}}(a)=\trace_{K_9/\mathbb{Q}}(a(1+\zeta_9+\zeta_9^3)(1+\zeta_9+\zeta_9^3)^*)=\mu(a)$, and since $1+\zeta_9+\zeta_9^3$ is not a unit, this means that $K_9$ is weakly unit reducible.

    From now on, we will use the notation
\begin{align*}
    \rho_N(x) \triangleq \trace_{K_N/\mathbb{Q}}(xx^*),
\end{align*}

for any $x \in K_N$ for any conductor $N$.

Consider now the field $K_{2^n}$ for $n \geq 4$. Let $a=((1+\zeta_{2^n})(1+\zeta_{2^n}^{-1}))^{-1}$. Clearly $a \in {K_{2^n}}_{>>0}$.

For any $x=\sum_{i=0}^{2^{n-1}-1} x_i \zeta_{2^n}^i \in \mathcal{O}_{K_{2^n}}$ with $x_i \in \mathbb{Z}$,
\begin{align*}
    \frac{x}{1+\zeta_{2^n}}=\frac{\sum_{i=0}^{2^{n-1}-1} \alpha_i \zeta_{2^n}^i}{1+\zeta_{2^n}}+y,
\end{align*}
for some $y \in \mathcal{O}_{K_{2^n}}$ and $\alpha_i \in \{0,1\}$, by virtue of the fact that $\frac{2}{1+\zeta_{2^n}} \in \mathcal{O}_{K_{2^n}}$. Now, given that $\frac{1+\zeta_{2^n}^i}{1+\zeta_{2^n}} \in \mathcal{O}_{K_{2^n}}$ for all integers $i$, we actually have that either $\frac{x}{1+\zeta_{2^n}} \in \mathcal{O}_{K_{2^n}}$, or
\begin{align*}
    \frac{x}{1+\zeta_{2^n}}=\frac{\zeta_{2^n}^k}{1+\zeta_{2^n}}+y,
\end{align*}
for some $y \in \mathcal{O}_{K_{2^n}}$ and some integer $k$. 

If $\frac{x}{1+\zeta_{2^n}}$ is an element of $\mathcal{O}_{K_{2^n}}$, this means that $x$ cannot be a unit, so we assume $\frac{x}{1+\zeta_{2^n}} \not\in \mathcal{O}_{K_{2^n}}$. Now, since
\begin{align*}
    \rho_{2^n}\left(\sum_{i=0}^{2^{n-1}-1} z_i \zeta_{2^n}^i\right)=2^{n-1}\sum_{i=0}^{2^{n-1}}z_i^2
\end{align*}
for any $z_i \in \mathbb{Q}$, and
\begin{align*}
    \frac{\zeta_{2^n}^k}{1+\zeta_{2^n}}=\zeta_{2^n}^k\sum_{i=0}^{2^{n-1}-1}\frac{(-1)^i}{2}\zeta_{2^n}^i,
\end{align*}
it immediately holds that
\begin{align*}
    \rho_{2^n}\left(\frac{\zeta_{2^n}^k}{1+\zeta_{2^n}}+y\right) \geq \rho_{2^n}\left(\frac{1}{1+\zeta_{2^n}}\right),
\end{align*}
(since $\pm 1/2$ cannot be rounded by an integer to be any smaller), and so for any unit $u$,
\begin{align*}
    \trace_{K_{2^n}/\mathbb{Q}}(auu^*) \geq \trace_{K_{2^n}/\mathbb{Q}}(a),
\end{align*}
  i.e. $a$ is reduced. But since $\rho(1)=2^{n-1}$ and $\rho\left(\frac{1}{1+\zeta_{2^n}}\right)=2^{2n-4}$,
  \begin{align*}
      \rho_{2^n}(1)<\rho_{2^n}\left(\frac{1}{1+\zeta_{2^n}}\right)
  \end{align*}
  for all $n \geq 4$, which implies
  \begin{align*}
      \trace_{K_{2^n}/\mathbb{Q}}(a(1+\zeta_{2^n})(1+\zeta_{2^n}^{-1}))<\trace_{K_{2^n}/\mathbb{Q}}(a) =\mu^*(a).
  \end{align*}
  Hence the field cannot be unit reducible. More specifically, we have
  \begin{align*}
      \frac{\mu^*(a)}{\mu(a)}=2^{n-3},
  \end{align*}
  and so 
  \begin{align*}
      \delta_{K_{2^n}} \geq 2^{n-3}>1,
  \end{align*}
  for all $n \geq 4$.

    Now consider the field $K_{p^n}$, for some odd prime $p$, and again take $a=((1-\zeta_{p^n})(1-\zeta_{p^n}^{-1}))^{-1}$. For any $x=\sum_{i=0}^{(p-1)p^{n-1}} x_i \zeta_{p^n}^i$ where $x_i \in \mathbb{Z}$,
    \begin{align*}
        x&=x_0+x_1\zeta_{p^n}+\sum_{i=2}^{(p-1)p^{n-1}}x_i\zeta_{p^n}^i
        \\&=x_0(1-\zeta_{p^n})+(x_0+x_1)\zeta_{p^n}+x_2\zeta_{p^n}^2+\sum_{i=3}^{(p-1)(p-1)}x_i\zeta_{p^n}^i
        \\&=\dots = (1-\zeta_{p^n})\sum_{i=0}^{p^{n-1}(p-1)-1}y_i\zeta_{p^n}^i + y_{p^{n-1}(p-1)}\zeta_{p^n}^{p^{n-1}(p-1)},
    \end{align*}
    for some $y_i \in \mathbb{Z}$. Using the fact that $\frac{p}{1-\zeta_{p^n}} \in \mathcal{O}_{K_{p^n}}$ yields
    \begin{align*}
        \frac{x}{1-\zeta_{p^n}}=y+\frac{\alpha \zeta_{p^n}^{p^{n-1}(p-1)}}{1-\zeta_{p^{n}}}
    \end{align*}
for some $y \in \mathcal{O}_{K_{p^n}}$ and $\alpha \in \{0,\dots,p-1\}$. Once again, we assume that $\alpha \neq 0$, as we are only interested in the case where $x$ is a unit. Then since
\begin{align*}
    \frac{\zeta_{p^{n}}^{p^{n-1}}}{1-\zeta_{p^n}}=\frac{\zeta_{p^n}^{p^{n-1}}}{p}\sum_{i=0}^{p^{n-1}-1}\sum_{j=0}^{p-2}(p-1-j)\zeta_{p^n}^{i+jp^{n-1}},
\end{align*}
this means that
\begin{align*}
    \frac{\alpha\zeta_{p^{n}}^{p^{n-1}}}{1-\zeta_{p^n}}=\frac{\zeta_{p^n}^{p^{n-1}}}{p}\sum_{i=0}^{p^{n-1}-1}\sum_{j=0}^{p-2}\alpha(p-1-j)\zeta_{p^n}^{i+jp^{n-1}}.
\end{align*}
Since $\alpha \neq 0 \mod p$, $\alpha(p-1-i)$ permutes each $(p-1-i)$ to a unique element mod $p$ for each distinct $0 \leq i \leq p-2$, so
\begin{align*}
    \frac{x}{1-\zeta_{p^n}}=y+\frac{\zeta_{p^n}^{p^{n-1}}}{p}\sum_{i=0}^{p^{n-1}-1}\sum_{j=0}^{p-2}(p-1-j)\zeta_{p^n}^{i+r(j)p^{n-1}},
\end{align*}
where $r$ denotes some permutation of the set $\{0,\dots,p-2\}$, and $y \in \mathcal{O}_{K_{p^n}}$.

Now, for any $z=\sum_{i=0}^{p^{n-1}(p-1)-1} z_i \zeta_{p^n}^i$, a simple computation yields
\begin{align*}
    \rho_{p^n}(z)=p^{n-1}\sum_{i=0}^{p^{n-1}-1}Q(z_{i},z_{i+p^{n-1}},z_{i+2p^{n-1}}, \dots, z_{i+(p-2)p^{n-1}}),
\end{align*}
   where $Q$ defines the $p-1$-dimensional positive-definite quadratic form
   \begin{align*}
       Q(m_1,\dots,m_{p-1})=(p-1)\sum_{i=1}^{p-1}m_i^2 - 2\sum_{i<j}m_im_j.
   \end{align*}
   It was shown in \cite{L75} that 
   \begin{align*}
       Q\left(\frac{r(p-1)}{p}-m_1,\frac{r(p-2)}{p}-m_2,\dots,\frac{r(1)}{p}-m_{p-1}\right) \geq Q\left(\frac{r(p-1)}{p},\frac{r(p-2)}{p},\dots,\frac{r(1)}{p}\right),
   \end{align*}
   for all integers $m_1,\dots,m_{p-1}$ and permutations $r$ of the set $\{1,\dots,p-1\}$. Collectively, this means that
   \begin{align*}
       \rho_{p^n}\left(\frac{x}{1-\zeta_{p^{n}}}\right) \geq \rho_{p^n}\left(\frac{1}{1-\zeta_{p^{n}}}\right),
   \end{align*}
   for all $x \in \mathcal{O}_{K_{p^n}}$ such that $1-\zeta_{p^n}$ does not divide $x$, and so
   \begin{align*}
       \trace_{K_{p^n}/\mathbb{Q}}(auu^*) \geq \trace_{K_{p^n}/\mathbb{Q}}(a)
   \end{align*}
   for all units $u$, i.e. $a$ is reduced. Also note that
   \begin{align*}
       \trace_{K_{p^n}/\mathbb{Q}}(a)=p^{2(n-1)}\frac{p^2-1}{12}.
   \end{align*}
   However, we also have
   \begin{align*}
       \trace_{K_{p^n}/\mathbb{Q}}(a(1-\zeta_{p^n})(1-\zeta_{p^n}^{-1}))=\trace_{K_{p^n}/\mathbb{Q}}(1)=p^{n-1}(p-1)=\mu(a),
   \end{align*}
   and so
   \begin{align*}
       \delta_{K_{p^n}} \geq \frac{\trace_{K_{p^n}/\mathbb{Q}}(a)}{\mu(a)}=\frac{\mu^*(a)}{\mu(a)}=p^{n-1}\frac{p+1}{12}>1,
   \end{align*}
   if $p=3$ and $n \geq 3$, $p=5$ and $n \geq 2$, $p=7$ and $n \geq 2$, $p=11$ and $n \geq 2$ or $p \geq 13$.
\end{proof}
\begin{proof}[Proof of Corollary \ref{cor2}]
    It suffices to show that for each positive $1 \leq \delta < \infty$, there can only be finitely many integers $M$ satisfying $\delta_{K_M} \leq \delta$. Suppose that $M=2^e\prod_{i=1}^t p_i^{e_i}$, for some odd primes $p_1,\dots,p_t$ ordered so $p_i<p_{i+1}$ for all $1 \leq i \leq t-1$, and positive integers $e,e_1,\dots,e_t$. Recall that Theorem \ref{2} states that for any $N$ dividing $M$, $\delta_{K_N} \leq \delta_{K_M}$. Since $\delta_{K_{p^n}} \geq p^{n-1}\frac{p+1}{12}\geq \frac{p+1}{12}$ for any odd prime $p$ and integer $n \geq 1$, if $\delta_{K_M} \leq \delta$ we must have $p_t \leq 12\delta -1$. Moreover, since $ \delta \geq \delta_{K_M} \geq \delta_{K_{p_i^{e_i}}} \geq p^{e_i-1}\frac{p_i+1}{12}>\frac{p^{e_i}}{12}$ for each $1 \leq i \leq t$, this implies $e_i < \frac{\log(\delta)}{\log(p_i)}+\log(12)$ for each $e_i$. Similarly, $e \leq \frac{\log(\delta)}{\log(2)}+4$. Therefore if $M$ satisfies $\delta_{K_M} \leq \delta$ we must have
    \begin{align*}
        M < 2^{\frac{\log(\delta)}{\log(2)}+4}\prod_{i=1}^t p_i^{\frac{\log(\delta)}{\log(p_i)}+\log(12)},
    \end{align*}
    satisfying $p_t \leq 12\delta-1$. Clearly there are only finitely many integers that satisfy this inequality if $\delta$ is finite.
\end{proof}
\section{Totally Real Subfields of Cyclotomic Fields}
For any cyclotomic field $K_N$ for some conductor $N$, we will use the notation $K_N^+$ to refer to the maximal totally real subfield of $K_N$, i.e. $K_N^+=\mathbb{Q}(\zeta_N+\zeta_N^{-1})$. A natural question to ask is whether $K_N$ being (not) unit reducible implies that $K_N^+$ is unit reducible and vice versa.

Recall that for every unit $u$ in $K_N$, there exists a unit $v$ in $K_N^+$ such that $u=\zeta_N^i v$ (see e.g. \cite{jsmilne}, Proposition 6.7). For any field $K$ and any subset $S$, denote by $S^2=\{xx^*: x \in S\}$. It then follows that ${\mathcal{O}_{K_N}^*}^2={\mathcal{O}_{K_N^+}^*}^2$. We also have $\mathcal{O}_{K_N^+}^2 \subseteq \mathcal{O}_{K_N}^2$.

The set of totally positive elements in $K_N$ and $K_N^+$ are identical (since necessarily the set of totally positive elements must be a subset of the totally real subfield). Then for any $a \in {K_N^+}_{>>0}$ and its associated element $a^\prime$ after being lifted into $K_N$ (notice every element of ${K_N}_{>>0}$ can be constructed in this manner), $\mu^*(a)=\frac{1}{2}\mu^*(a^\prime)$ (where the $\frac{1}{2}$ factor comes from the fact that $[K_N:K_N^+]=2$). Since $\mathcal{O}_{K_N^+}^2 \subseteq \mathcal{O}_{K_N}^2$, $\frac{1}{2}\mu(a^\prime) \leq \mu(a)$. From this we can deduce that if $K_N$ is unit reducible, then $K_N^+$ is unit reducible, and if $K_N^+$ is not unit reducible then $K_N$ is not unit reducible.

From this, combined with Theorem \ref{3}, we deduce that $K_N^+$ is unit reducible for $N=3,4,5,7,8,9,12,15$. However, of these the result that $K_N^+$ for $N=15$ is unit reducible is the only new result. The maximal totally real subfields of $K_3, K_4$ are trivially unit reducible (as they are both $\mathbb{Q}$), whilst the maximal totally real subfields of $K_N$ are real quadratic for $N=5,8,12$ and simplest cubic fields for $N=7,9$, the cases of which were both covered in \cite{LPL23}.

We now deduce a lower bound on $\delta_{K_N^+}$ using similar techniques in the last section.
\begin{proposition}
    For all $n \geq 1$,
    \begin{align*}
        \delta_{K_{2^n}^+} \geq 2^{n-4},
    \end{align*}
    and for all odd primes $p$,
    \begin{align*}
        \delta_{K_{p^n}^+} \geq \frac{p^{n-1}(p^2-1)}{24(p-2)}.
    \end{align*}
\end{proposition}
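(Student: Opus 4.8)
The plan is to run the argument of Theorem \ref{3} inside the totally real subfield, using the \emph{same} distinguished totally positive element. For $p^n$ take $a = ((1-\zeta_{p^n})(1-\zeta_{p^n}^{-1}))^{-1}$ and for $2^n$ take $a = ((1+\zeta_{2^n})(1+\zeta_{2^n}^{-1}))^{-1}$; in both cases $a$ is fixed by complex conjugation, so $a \in K_N^+$ is totally positive and I may invoke the transfer relations established just above, namely $\mu^*_{K_N^+}(a) = \tfrac12 \mu^*_{K_N}(a')$ and $\tfrac12\mu_{K_N}(a') \le \mu_{K_N^+}(a)$, where $a'$ is the lift into $K_N$. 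Everything then reduces to the two numbers $\mu^*_{K_N^+}(a)$ and $\mu_{K_N^+}(a)$, after which $\delta_{K_N^+} \ge \mu^*_{K_N^+}(a)/\mu_{K_N^+}(a)$.

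First I would obtain $\mu^*$ essentially for free. Since $a'$ was shown reduced in Theorem \ref{3}, $\mu^*_{K_N}(a') = \trace_{K_N/\mathbb{Q}}(a')$, whose value was already computed there: $\rho_{2^n}(1/(1+\zeta_{2^n})) = 2^{2n-4}$ in the $2$-power case and $p^{2(n-1)}\tfrac{p^2-1}{12}$ in the odd case. Halving gives $\mu^*_{K_{2^n}^+}(a) = 2^{2n-5}$ and $\mu^*_{K_{p^n}^+}(a) = p^{2(n-1)}\tfrac{p^2-1}{24}$. The $2$-power estimate of $\mu$ is then immediate: the symmetric witness $y = (1+\zeta_{2^n})(1+\zeta_{2^n}^{-1}) = 2+\zeta_{2^n}+\zeta_{2^n}^{-1} \in \mathcal{O}_{K_{2^n}^+}$ satisfies $ayy^* = y$, so $\trace_{K_{2^n}^+/\mathbb{Q}}(ayy^*) = \trace_{K_{2^n}^+/\mathbb{Q}}(2+\zeta_{2^n}+\zeta_{2^n}^{-1}) = 2^{n-1}$, whence $\mu_{K_{2^n}^+}(a) \le 2^{n-1}$ and $\delta_{K_{2^n}^+} \ge 2^{2n-5}/2^{n-1} = 2^{n-4}$ exactly as claimed.

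The odd case needs a genuinely sharper witness. Writing $\trace_{K_{p^n}^+/\mathbb{Q}}(ayy^*) = \tfrac12\rho_{p^n}(y/(1-\zeta_{p^n}))$ for real $y$, the naive choice $y = (1-\zeta_{p^n})(1-\zeta_{p^n}^{-1})$ only yields $\trace_{K_{p^n}^+/\mathbb{Q}}(ayy^*) = p^{n-1}(p-1)$, which is a factor $\tfrac{p-1}{p-2}$ too large: it would give $\delta \ge \tfrac{p^{n-1}(p+1)}{24}$ rather than the stated bound. To reach the proposition I must instead minimize $\rho_{p^n}(y/(1-\zeta_{p^n}))$ over \emph{real} $y$, i.e. find the shortest vector of $\tfrac{1}{1-\zeta_{p^n}}\mathcal{O}_{K_{p^n}^+}$ under $\rho_{p^n}$, reusing the block decomposition $\rho_{p^n}(z)=p^{n-1}\sum_i Q(z_i,z_{i+p^{n-1}},\dots)$ with $Q(m_1,\dots,m_{p-1}) = p\sum m_j^2 - (\sum m_j)^2$. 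The key numerical hook is that while an axis pattern gives $Q(1,0,\dots,0)=p-1$ (the full-field minimum attained at the rotations $\zeta_{p^n}^{\,j}(1-\zeta_{p^n})$ of the ramified prime), the pattern $Q(1,1,0,\dots,0) = 2p-4 = 2(p-2)$ is cheaper per block; I expect the optimal conjugation-symmetric witness to realize precisely this $Q$-profile, turning the $(p-1)$ of the full field into the $(p-2)$ of the proposition and giving $\mu_{K_{p^n}^+}(a) \le p^{n-1}(p-2)$.

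The main obstacle is exactly this constrained minimization. The full-field minimizers $\zeta_{p^n}^{\,j}(1-\zeta_{p^n})$ are not conjugation-invariant, so restricting to the symmetric sublattice removes them, and one must re-run the minimization of $Q$ over only those coordinate patterns that correspond to a real $y$ (equivalently, to $w=y/(1-\zeta_{p^n})$ with $w = -\zeta_{p^n}^{-1}w^*$), showing that the cheapest admissible symmetric configuration has total $Q$-value $2(p-2)$ and that no shorter symmetric vector exists. This is the delicate step, since the reality constraint couples coordinates across blocks and one must check it is compatible with the $(1,1,0,\dots,0)$-type profile. Once $\mu_{K_{p^n}^+}(a) \le p^{n-1}(p-2)$ is secured, dividing into $\mu^*_{K_{p^n}^+}(a) = p^{2(n-1)}\tfrac{p^2-1}{24}$ yields $\delta_{K_{p^n}^+} \ge \tfrac{p^{n-1}(p^2-1)}{24(p-2)}$, completing the proof.
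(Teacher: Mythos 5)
Your $2$-power argument is exactly the paper's (same $a=(2+\theta_{2^n})^{-1}$, same witness $2+\theta_{2^n}$, same halving relations), and it is correct. Your diagnosis of the odd-prime case is also correct, and it in fact exposes an error in the paper's own proof: the paper's stated constant rests on the identity $\trace_{K_{p^n}/\mathbb{Q}}\bigl((1-\zeta_{p^n})(1-\zeta_{p^n}^{-1})\bigr)=2p^{n-1}(p-2)$, but since $(1-\zeta_{p^n})(1-\zeta_{p^n}^{-1})=2-\zeta_{p^n}-\zeta_{p^n}^{-1}$ and $\trace_{K_{p^n}/\mathbb{Q}}(\zeta_{p^n})$ equals $-1$ for $n=1$ and $0$ for $n\geq2$, the true value is $2p^{n-1}(p-1)$ for $n\geq2$ (and $2p$ for $n=1$). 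The paper's proof is nothing more than your ``naive'' witness $y=2-\theta_{p^n}=a^{-1}$ combined with this faulty trace value; with the trace corrected, that argument yields only $\delta_{K_{p^n}^+}\geq p^{n-1}(p+1)/24$, precisely the weaker bound you identified.

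The genuine gap is that your proposed repair cannot be carried out: the step you flag as ``delicate'' actually fails. First, a small slip: $Q(1,1,0,\dots,0)=2(p-2)$ is not cheaper than $Q(1,0,\dots,0)=p-1$; it is strictly larger for all $p>3$. Second, and decisively, no conjugation-admissible vector of total $Q$-value $2(p-2)$ exists. For $p\geq5$, two nonzero blocks already cost at least $2(p-1)$, so such a $w$ would be supported in one block; a short Cauchy--Schwarz argument shows that any integer vector with $Q(m)\leq 2(p-2)$ has entries in $\{0,\pm1\}$, all of one sign, with support of size $1$, $2$, $p-2$ or $p-1$, which translated back means $w=\pm\zeta_{p^n}^{c}$ or $w=\pm\zeta_{p^n}^{c}\bigl(1+\zeta_{p^n}^{dp^{n-1}}\bigr)$. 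For every such $w$ the reality condition on $y=w(1-\zeta_{p^n})$, namely $w^*=-\zeta_{p^n}w$, forces some power of $\zeta_{p^n}$ to equal $-1$, which is impossible since $\zeta_{p^n}$ has odd order. Hence the minimum of $\rho_{p^n}$ over the symmetric sublattice is exactly $2p^{n-1}(p-1)$, attained at $w=1-\zeta_{p^n}^{-1}$ (i.e.\ $y=2-\theta_{p^n}$), so with this choice of $a$ one has $\mu(a)=p^{n-1}(p-1)$ in the nontrivial range, and $p^{n-1}(p+1)/24$ is the best bound this method can give. In particular, the printed constant $p^{n-1}(p^2-1)/(24(p-2))$ is not provable by this argument at all (neither yours nor the paper's); the proposition, and the downstream corollary whose threshold ``any odd prime $p\geq23$'' relies on $(p^2-1)/(24(p-2))>1$ at $p=23$, should be restated with the weaker bound, which still suffices for $\delta_{K_N^+}\to\infty$.
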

\begin{proof}
    From now on, we will use the notation $\theta_N \triangleq \zeta_N+\zeta_N^{-1}$. Consider the field $K_{2^n}^+$ for some $n \geq 1$. We set $a=(2+\theta_{2^n})^{-1}$, which is totally positive for any $n$. Lifting $a$ to $K_{2^n}$ as $a^\prime=(1+\zeta_{2^n})^{-1}(1+\zeta_{2^n}^{-1})$, we have already shown that $\mu^*(a^\prime)=\trace_{K_{2^n}/\mathbb{Q}}(a^\prime)=2^{2n-4}$, and so $\mu^*(a)=2^{2n-5}$. However, $a(2+\theta_{2^n})^2=2+\theta_{2^n}=a^{-1}$, and since $\trace_{K_{2^n}/\mathbb{Q}}({a^{\prime}}^{-1})=2^{n}$ we have $\trace_{K_{2^n}^+/\mathbb{Q}}(a(2+\theta_{2^n})^2)=\trace_{K_{2^n}^+/\mathbb{Q}}(a^{-1})=2^{n-1} \geq \mu(a)$. Hence
    \begin{align*}
        \delta_{K_{2^n}^+} \geq \frac{\mu^*(a)}{\mu(a)} \geq \frac{\mu^*(a)}{\trace_{K_{2^n}^+/\mathbb{Q}}(a^{-1})}=2^{n-4}.
    \end{align*}
    Similarly, when we consider the field $K_{p^n}^+$ for some odd prime $p$ and $n \geq 1$, we take $a=2-\theta_{p^n}$, which is totally positive for any $n$. Following similar logic to before, and using the fact that $\trace_{K_{p^n}/\mathbb{Q}}((1-\zeta_{p^n})(1-\zeta_{p^n}^{-1}))=2p^{n-1}(p-2)$,
    \begin{align*}
        \delta_{K_{p^n}^+} \geq \frac{\mu^*(a)}{\mu(a)} \geq \frac{\mu^*(a)}{\trace_{K_{p^n}^+/\mathbb{Q}}(a^{-1})}=\frac{p^{n-1}(p^2-1)}{24(p-2)}.
    \end{align*}
\end{proof}
\begin{proposition}
    If $N \mid M$, if $K_N^+$ is not unit reducible then $K_M^+$ is not unit reducible. Moreover, $\delta_{K_M^+} \geq \delta_{K_N^+}$.
\end{proposition}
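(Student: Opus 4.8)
The plan is to mirror the proof of Theorem \ref{2} essentially verbatim, but carried out one level down, in the real subfields. One cannot simply chain the implications already established: the section shows that $K_N^+$ not unit reducible forces $K_N$ not unit reducible, and Theorem \ref{2} propagates non-reducibility from $K_N$ up to $K_M$, but the correspondence between a cyclotomic field and its maximal real subfield only runs in the direction $K^+ \to K$, so there is no route back down from $K_M$ to $K_M^+$. A direct argument in the real tower is therefore required. As in Theorem \ref{2}, I would first treat the prime-power step, writing $N = sp^n$ and $M = sp^m$ with $m \ge n \ge 0$ and $\gcd(s,p)=1$ (so that $[K_M^+:K_N^+] = \phi(M)/\phi(N) = p^{m-n}$ whenever both fields are complex), and then obtain the general case $N \mid M$ by the same inductive factorisation over the primes dividing $M$ used in the second half of that proof.

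For the prime-power step, suppose $K_N^+$ is not unit reducible and fix a reduced witness $a \in {K_N^+}_{>>0}$ together with a non-unit minimiser $x \in \mathcal{O}_{K_N^+}$, so that $\trace_{K_N^+/\mathbb{Q}}(axx^*) = \mu(a) < \mu^*(a) = \trace_{K_N^+/\mathbb{Q}}(a)$. Lift $a$ and $x$ to $a', x' \in K_M^+$. The two tools driving the computation are, first, the degree-two relation $\trace_{K_M^+/\mathbb{Q}}(z) = \tfrac12\trace_{K_M/\mathbb{Q}}(z)$ valid for every $z \in K_M^+$ (since $\trace_{K_M/K_M^+}(z) = 2z$), and second, the trace identity \eqref{4} from Theorem \ref{2} applied in the full field $K_M$ to the lift of $a$. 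Because $x$ contributes only the $i=0$ term in the $\zeta_M$-expansion, combining these (the two factors of $2$ cancelling) gives $\trace_{K_M^+/\mathbb{Q}}(a'x'x'^*) = p^{m-n}\trace_{K_N^+/\mathbb{Q}}(axx^*) = p^{m-n}\mu(a)$ and likewise $\trace_{K_M^+/\mathbb{Q}}(a') = p^{m-n}\mu^*(a)$. That $x'$ remains a non-unit follows from multiplicativity of the norm in the tower, $|\norm_{K_M^+/\mathbb{Q}}(x')| = |\norm_{K_N^+/\mathbb{Q}}(x)|^{p^{m-n}} \ge 2$.

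With these two evaluations in hand the conclusion is immediate: $\mu(a) < \mu^*(a)$ yields $\trace_{K_M^+/\mathbb{Q}}(a'x'x'^*) < \trace_{K_M^+/\mathbb{Q}}(a')$, whence $\mu(a') \le \trace_{K_M^+/\mathbb{Q}}(a'x'x'^*) = p^{m-n}\mu(a) < p^{m-n}\mu^*(a) = \mu^*(a')$, so $K_M^+$ is not unit reducible; and dividing the two estimates gives $\delta_{K_M^+} \ge \mu^*(a')/\mu(a') \ge \mu^*(a)/\mu(a)$, which upon taking the supremum over all totally positive $a$ in $K_N^+$ yields $\delta_{K_M^+} \ge \delta_{K_N^+}$. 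The general case $N \mid M$ then follows by the inductive prime-by-prime argument of Theorem \ref{2}.

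The step I expect to be the crux is the identification $\mu^*(a') = p^{m-n}\mu^*(a) = \trace_{K_M^+/\mathbb{Q}}(a')$, i.e. that the lifted element $a'$ is itself reduced in $K_M^+$. The inequality $\mu^*(a') \le \trace_{K_M^+/\mathbb{Q}}(a')$ is automatic, but the reverse requires that no unit of $\mathcal{O}_{K_M^+}$ — in particular no unit failing to descend from $\mathcal{O}_{K_N^+}$ — beats the trace of $a'$, and the crude lower bound obtainable from \eqref{4} alone is too weak to separate $\mu^*(a')$ from $\mu(a')$. I would handle this exactly as Theorem \ref{2} handles the analogous point, by passing to the reduced representative of the class of $a'$ (the quantities $\mu$ and $\mu^*$ being invariants of the equivalence class), so that $\mu^*$ is realised by the trace, and by recording the harmless bookkeeping for degenerate conductors where $K_N^+$ or $K_M^+$ collapses to $\mathbb{Q}$, for which the statement is trivial.
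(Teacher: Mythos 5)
Your overall route is the same as the paper's: its entire proof of this proposition is a one-line pointer back to Theorem~\ref{2} together with the fact $\mathcal{O}_{K_M^+}=\mathbb{Z}[\theta_M]$, and your plan---lift a reduced witness $a$ and its non-unit minimiser $x$ from $K_N^+$ to $K_M^+$, treat one prime power at a time, then induct over the primes dividing $M$---is exactly the Theorem~\ref{2} blueprint. Your one deviation is that you evaluate traces in the real tower by passing through $K_M$, using $\trace_{K_M^+/\mathbb{Q}}(z)=\tfrac{1}{2}\trace_{K_M/\mathbb{Q}}(z)$ together with the identity \eqref{4}, rather than working directly with the basis $\mathbb{Z}[\theta_M]$ as the paper suggests; this is legitimate, and arguably cleaner, since \eqref{4} is already proved and one avoids re-deriving a relative-trace identity for the $\theta_M$-power basis. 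Your bookkeeping (degenerate conductors with $K^+=\mathbb{Q}$; persistence of non-units under lifting via norms in towers) is correct.

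The genuine problem is the step you yourself flag as the crux, and the repair you propose does not close it. Passing to the reduced representative of the class of $a^\prime$ is of no help: if $a^{\prime\prime}=a^\prime u_0u_0^*$ is reduced, then indeed $\mu^*(a^{\prime\prime})=\trace_{K_M^+/\mathbb{Q}}(a^{\prime\prime})$, but $a^{\prime\prime}$ is no longer a lift from $K_N^+$, so \eqref{4} gives no evaluation of that trace; while for the lift $a^\prime$ itself, class invariance yields only $\mu^*(a^\prime)\leq\trace_{K_M^+/\mathbb{Q}}(a^\prime)=p^{m-n}\mu^*(a)$, which is the wrong direction. What is actually required is a lower bound on $\trace_{K_M^+/\mathbb{Q}}(a^\prime uu^*)$ valid for \emph{every} unit $u\in\mathcal{O}_{K_M^+}^*$, including units that do not descend from $K_N^+$. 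Expanding such a $u$ over $\mathcal{O}_{K_N}$ and applying \eqref{4} delivers this only partially: if exactly one coordinate $x_i$ is nonzero, then $x_i$ is forced to be a unit of $\mathcal{O}_{K_N}$ (norms in towers, together with ${\mathcal{O}_{K_N}^*}^2={\mathcal{O}_{K_N^+}^*}^2$), giving the desired bound $p^{m-n}\mu^*(a)$; but if two or more coordinates are nonzero, one obtains only a bound of the shape $2p^{m-n}\mu(a)$. Both bounds exceed $\mu(a^\prime)$, so the qualitative implication (non-unit-reducibility propagates upward) survives, but the argument then proves only $\delta_{K_M^+}\geq\min\bigl(\delta_{K_N^+},2\bigr)$ rather than the asserted monotonicity $\delta_{K_M^+}\geq\delta_{K_N^+}$. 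In fairness, this is precisely the ``we may assume without loss of generality that $a^\prime\in\mathcal{F}_{K_M}$'' step asserted without justification in the paper's own proof of Theorem~\ref{2}, which the proposition inherits; you have reproduced the paper's argument faithfully, weak point included, but as a standalone proof the identification $\mu^*(a^\prime)=p^{m-n}\mu^*(a)$ remains unproven.
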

    \begin{proof}
        The proof follows almost identically to the proof of Theorem \ref{2}, using the fact that $\mathcal{O}_{K_M^+}=\mathbb{Z}[\theta_M]$.
    \end{proof}
We immediately obtain the following corollaries as a result, and omit the proofs as they are identical to that of the cyclotomic case.
\begin{corollary}
    The field $K_N^+$ is not unit reducible if $N$ is divisible by $2^5$, $3^3$, $5^2$, $7^2$, $11^2$, $13^2$, $17^2$, $19^2$, or any odd prime $p \geq 23$.
\end{corollary}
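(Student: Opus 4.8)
The plan is to deduce this corollary from the two preceding propositions in exactly the way the cyclotomic corollary followed from Theorems \ref{2} and \ref{3}. First I would use the lower bounds $\delta_{K_{2^n}^+} \geq 2^{n-4}$ and $\delta_{K_{p^n}^+} \geq \frac{p^{n-1}(p^2-1)}{24(p-2)}$ to determine, for each prime $p$, the smallest exponent $n$ for which the relevant bound strictly exceeds $1$. Each such inequality is certified by the specific totally positive $a$ constructed in the proof of the first proposition, for which $\mu^*(a)/\mu(a)$ equals the stated bound; whenever that bound is $>1$ this exhibits an element not attaining its trace-minimum at a unit, so $K_{p^n}^+$ is not unit reducible. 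I would then invoke the divisibility proposition (the totally real analogue of Theorem \ref{2}): if one of these prime powers $p^n$ divides $N$, then $\delta_{K_N^+} \geq \delta_{K_{p^n}^+} > 1$, and hence $K_N^+$ is not unit reducible.

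The substance of the argument is the threshold arithmetic. For $p=2$ the bound $2^{n-4}$ exceeds $1$ precisely when $n \geq 5$, giving the entry $2^5$. For odd $p$ the condition $\frac{p^{n-1}(p^2-1)}{24(p-2)} > 1$ must be solved for the least admissible $n$. Setting $n=1$ reduces it to $p^2 - 1 > 24(p-2)$, i.e. $p^2 - 24p + 47 > 0$, whose larger root lies just below $22$; thus the $n=1$ bound already clears $1$ for every odd prime $p \geq 23$, which accounts for the clause ``any odd prime $p \geq 23$''. For the smaller odd primes the $n=1$ bound fails and a direct evaluation shows $n=2$ suffices for $p = 5,7,11,13,17,19$ (for instance the bound is $\frac{5^{n-1}}{3}$ at $p=5$ and $\frac{15\cdot 19^{n-1}}{17}$ at $p=19$), while $p=3$ needs $n=3$ since the bound simplifies to $3^{n-2}$. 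This reproduces the entries $3^3, 5^2, 7^2, 11^2, 13^2, 17^2, 19^2$.

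There is no genuine obstacle beyond this bookkeeping, since both propositions are already in hand; the only point requiring care is the boundary between primes appearing to the first power and those appearing squared. Concretely, $p=19$ gives $\frac{15}{17} < 1$ at $n=1$ but exceeds $1$ at $n=2$, so it enters the list as $19^2$, whereas $p=23$ gives $\frac{22}{21} > 1$ already at $n=1$ and hence enters as $23$ itself (as do all larger odd primes). Confirming that $23$ is precisely the first prime for which the $n=1$ bound clears $1$ pins down the threshold stated in the corollary, and the lifting to arbitrary $N$ divisible by one of these prime powers is then immediate from the divisibility proposition applied inductively across the prime factorisation of $N$, exactly as in the proof of Theorem \ref{2}.
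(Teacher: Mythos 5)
Your proposal is correct and matches the paper's intended argument exactly: the paper omits the proof, stating it is identical to the cyclotomic case, i.e. one checks for each prime the least exponent making the lower bound on $\delta_{K_{p^n}^+}$ exceed $1$ and then applies the divisibility proposition. Your threshold arithmetic (including the boundary cases $3^3$, $19^2$ versus $p \geq 23$) is accurate, so there is nothing to add.
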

\begin{corollary}
    $\delta_{K_N^+} \to \infty$ as $N \to \infty$.
\end{corollary}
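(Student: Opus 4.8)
The plan is to mirror the proof of Corollary \ref{cor2} essentially verbatim, substituting the totally-real analogues established in this section for their cyclotomic counterparts. As there, it suffices to show that for each finite $\delta \geq 1$ only finitely many conductors $M$ satisfy $\delta_{K_M^+} \leq \delta$. I would begin by writing $M = 2^e \prod_{i=1}^t p_i^{e_i}$ for distinct odd primes $p_1 < \cdots < p_t$ and exponents $e, e_1, \dots, e_t \geq 0$, and then invoke the monotonicity result of the preceding proposition: since $p_i^{e_i} \mid M$ and $2^e \mid M$, we have $\delta_{K_{p_i^{e_i}}^+} \leq \delta_{K_M^+}$ and $\delta_{K_{2^e}^+} \leq \delta_{K_M^+}$.

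Next I would feed in the explicit lower bounds from the first proposition of this section. From $\delta_{K_{p^n}^+} \geq \frac{p^{n-1}(p^2-1)}{24(p-2)}$, taking $n = 1$ gives, for the largest prime $p_t$ dividing $M$,
\begin{align*}
    \frac{p_t^2 - 1}{24(p_t - 2)} \leq \delta_{K_{p_t}^+} \leq \delta.
\end{align*}
Because $\frac{p^2-1}{24(p-2)} \sim \frac{p}{24} \to \infty$ as $p \to \infty$, this inequality can hold for only finitely many odd primes, so $p_t$ — and hence the entire set of odd prime divisors of $M$ — is bounded in terms of $\delta$. To bound the exponents, I would reuse the same estimate with general $n$: the inequality $\frac{p_i^{e_i-1}(p_i^2-1)}{24(p_i-2)} \leq \delta$ forces $p_i^{e_i - 1} \leq \frac{24(p_i-2)\delta}{p_i^2-1}$, which bounds $e_i$ once $p_i$ is bounded, while $\delta_{K_{2^n}^+} \geq 2^{n-4}$ together with $2^{e-4} \leq \delta$ bounds $e$ by $4 + \log_2 \delta$. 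Combining these, every $M$ with $\delta_{K_M^+} \leq \delta$ is a product of finitely many bounded prime powers drawn from a bounded set of primes, so only finitely many such $M$ exist; letting $\delta \to \infty$ then yields $\delta_{K_N^+} \to \infty$.

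The only point requiring a moment's care — and the mild difference from the cyclotomic case — is that the per-prime constants $\frac{p^2-1}{24(p-2)}$ can dip below $1$ for small $p$ and $n$ (for instance $\tfrac{1}{3}$ at $p = 3$, $n = 1$), so that no single small prime power need itself be non-unit-reducible. This is immaterial, however, since the argument only needs each of the finitely many quantities $p_t, e_1, \dots, e_t$ and $e$ to be bounded as a function of $\delta$, and the growth of the bounds in both $p$ and $n$ delivers exactly that. I expect no genuine obstacle: the logical skeleton is identical to that of Corollary \ref{cor2}, and the substantive work was already carried out in establishing the two propositions of this section.
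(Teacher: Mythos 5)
Your proposal is correct and follows exactly the route the paper intends: the paper omits this proof, stating it is identical to that of Corollary \ref{cor2}, and your argument is precisely that adaptation, combining the monotonicity proposition with the explicit prime-power lower bounds $\delta_{K_{2^n}^+} \geq 2^{n-4}$ and $\delta_{K_{p^n}^+} \geq \frac{p^{n-1}(p^2-1)}{24(p-2)}$ to bound the primes and exponents of any conductor $M$ with $\delta_{K_M^+} \leq \delta$. Your added remark that the per-prime constants may be less than $1$ for small primes, yet this does not affect the finiteness argument, is a correct and worthwhile clarification.
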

\section{Concluding Remarks}
In this work, we have deduced a number of results regarding unit reducibility in cyclotomic fields and their totally real subfields. Specifically, we determined that the number of unit reducible cyclotomic fields must be finite in number, and that the discrepancy between the trace of the shortest generator of a positive unary form and the shortest nonzero vector length of the corresponding positive-definite quadratic form can become arbitrarily large, as the conductor for the cyclotomic field grows. We also showed a similar result for their maximal totally real subfields. We deduced a simple method by which we can determine whether an arbitrary field is unit reducible, and used this to prove that a number of cyclotomic fields are unit reducible, though this list is not complete. 

Of course, the principal open problem is to determine the entire list of unit reducible cyclotomic fields. The authors make the following conjecture:
\begin{conjecture}
    The cyclotomic fields $K_N$ where $N=3,4,5,7,8,9,11,12,15,20,21,24$ are the only unit reducible cyclotomic fields.
\end{conjecture}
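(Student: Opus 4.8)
The plan is to prove the conjecture by turning the finiteness already furnished by Corollary 1 into a two-sided verification: confirm that each of the twelve listed conductors gives a unit reducible field, and confirm that every other conductor does not. The enabling observation is that Corollary 1 forces any unit reducible conductor $N$ to have the shape $N = 2^a 3^b 5^c 7^d 11^e$ with $a \le 3$, $b \le 2$ and $c,d,e \le 1$; and since a genuine conductor satisfies $N \not\equiv 2 \pmod 4$, we may take $a \in \{0,2,3\}$. This leaves only finitely many candidate conductors, so the entire conjecture becomes a finite problem, and the remaining work is to decide unit reducibility for each candidate.

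For the negative direction I would reduce to Theorem 2 wherever possible: writing $L = \{3,4,5,7,8,9,11,12,15,20,21,24\}$, a candidate $N \notin L$ fails to be unit reducible as soon as some conductor dividing $N$ is already known not to be unit reducible. After discarding all candidates divisible by $2^4, 3^3, 5^2, 7^2, 11^2$ or a prime $\ge 13$, what survives is a short list of \emph{minimal} composite conductors — products such as $5\cdot 7$, $3\cdot 11$, $5\cdot 11$, $7\cdot 11$, $8\cdot 5$, $9\cdot 5$, $9\cdot 7$ and the like — none of whose proper sub-conductors is non-unit-reducible. For each of these I would exhibit an explicit totally positive witness $a$ with $\mu(a) < \mu^*(a)$, built in the spirit of the elements $((1-\zeta_{p^n})(1-\zeta_{p^n}^{-1}))^{-1}$ from the proof of Theorem \ref{3}: take $a$ to be the inverse of $xx^*$ for a well-chosen non-unit $x$ of small norm, verify that $a$ is reduced, and compare $\trace_{K_N/\mathbb{Q}}(a)$ against $\rho_N$ of the minimizing lattice vector.

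For the positive direction the outstanding work is to establish unit reducibility of $K_{11}, K_{20}, K_{21}, K_{24}$, the members of $L$ not covered by Theorem \ref{3}. In these degrees ($\phi = 10, 8, 12, 8$) the clean sufficient condition of Theorem \ref{1}, namely $\gamma_{K_N} < \eta_{K_N}^{2/n}n$ obtained through $\gamma_{K_N} \le \gamma_n|\Delta_{K_N}|^{1/n}$, is likely to fail, so one must argue as in the boundary cases $N = 8, 9$: restrict the totally positive $a$ to a fundamental domain for the scaling action $a \mapsto auu^*$ of the units (so that only finitely many $a$ up to positive rational multiples remain relevant), and for each such reduced $a$ verify by HKZ/lattice reduction of the real quadratic form $\trace_{K_N/\mathbb{Q}}(axx^*)$ that its minimum equals $\trace_{K_N/\mathbb{Q}}(a) = \mu^*(a)$.

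The hard part will be making this positive direction rigorous rather than merely experimental. Two difficulties stand out. First, one needs a provably correct reduction of the infinitely many $a \in {K_N}_{>>0}$ to a compact, finitely certifiable region, which requires genuine control of the reduction domain $\mathcal{F}_{K_N}$ together with a bound on the norms of the possible minimizers in $\mathcal{M}(a)$, so that the search over $x$ is finite and exhaustive. Second, because Theorem \ref{1} is only sufficient, there may well be further boundary conductors among $11,20,21,24$ at which $\gamma_n|\Delta_{K_N}|^{1/n}$ exactly meets $n\eta_{K_N}^{2/n}$, forcing the same delicate analysis that separated weak from strong unit reducibility for $N = 8, 9$. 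In short, I expect the conjecture to reduce to a large but finite certified computation, the true theoretical gap being the absence of a clean necessary-and-sufficient criterion for unit reducibility to replace the case-by-case lattice reductions.
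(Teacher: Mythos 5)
This statement is a \emph{conjecture}: the paper itself offers no proof of it and explicitly lists it as the principal open problem, so there is no ``paper proof'' to compare against. Your proposal, by your own admission, is a research plan rather than a proof, and the two halves that would constitute the actual mathematical content are both missing. The reduction to finitely many candidate conductors $N = 2^a 3^b 5^c 7^d 11^e$ (with $a \in \{0,2,3\}$, $b \le 2$, $c,d,e \le 1$) is correct and is an immediate consequence of Corollary 1, but everything beyond that is deferred: the positive direction (unit reducibility of $K_{11}, K_{20}, K_{21}, K_{24}$, which Theorem 3 does not cover) and the negative direction (witnesses for the minimal composite candidates such as $28, 33, 35, 36, 40, 44, 45, 55, 63, 72, 77$) are described only as things one ``would'' do. No witness is constructed, no computation is performed, and no criterion is verified, so the conjecture remains exactly as open at the end of your argument as at the beginning.

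Two concrete obstacles in your plan deserve naming, because they show the remaining work is not routine. First, your claim that restricting $a$ to a fundamental domain for the action $a \mapsto auu^*$ leaves ``only finitely many $a$ up to positive rational multiples'' is false: the reduction domain $\mathcal{F}_{K_N}$ is a positive-dimensional cone (in the logarithmic embedding, the totally positive elements modulo units and scaling form a continuum whenever the unit rank is positive), so the proposed verification is not a finite computation as stated. Making it finite requires an additional idea the proposal lacks --- e.g.\ Voronoi-type theory of perfect unary forms (as in \cite{S00} and \cite{LPL23}), where $\mu(a)$ is piecewise linear on the cone and unit reducibility can be certified on finitely many cells. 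Second, the negative-direction witnesses cannot be built ``in the spirit of'' $a = ((1-\zeta_{p^n})(1-\zeta_{p^n}^{-1}))^{-1}$: that construction exploits the fact that $1-\zeta_{p^n}$ is a non-unit of prime norm $p$, whereas for a composite conductor $N$ with two or more distinct prime factors, $1-\zeta_N$ \emph{is} a unit, and non-units of small norm look entirely different. Both the choice of $x$ and the proof that the resulting $a$ is reduced (the hardest step even in the paper's prime-power case, where it relies on Lenstra's inequality for the form $Q$) would have to be redone from scratch for each candidate field.
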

This would be an interesting result if true - in \cite{S00}, the number of homothety classes of perfect unary forms in cyclotomic fields were studied. Of these, the fields of conductors $N=3,4,5,7,8,9,11,20,21,24$ were found to have all the perfect unary forms (up to homothety class) correspond to perfect rational quadratic forms of dimension $\phi(N)$. Moreover, none of the other cyclotomic fields considered satisfied this property. Therefore, if it were shown that this list of fields were the only unit reducible fields, this may suggest a connection between the studies of unit reducible fields and perfect unary forms for cyclotomic fields.


\begin{thebibliography}{}

\bibitem{LPL23} Leibak, A., Porter, C., Ling, C.: ``Unit reducible fields and perfect unary forms''. To appear in Journal de Theorie des Nombres Bordeaux (2023).
\bibitem{K60} Koecher, M.: ``Beitr\"age zu einer reduktionstheorie in positivit\"atsbereichen. I''. Math. Ann., vol. 141, pp. 384--432 (1960).
\bibitem{L05} Leibak, A.: ``On additive generalization of Voronoi’s theory for algebraic number fields''. Proc. Estonian Acad. Sci. Phys. Math., vol. 54, no. 4, pp. 195--211 (2005).
\bibitem{KZ73} Korkine, A., Zolotareff, G.: ``Sur les formes quadratiques''. Mathematische Annalen, vol. 6, pp. 366--389 (1873).
\bibitem{B35} Blichfeldt, H.F.: ``The minimum values of positive quadratic forms in six, seven and eight variables''. Mathematische Zeitschrift, vol. 39, pp. 1--15 (1935).

\bibitem{L75} Lenstra, H.W.: ``Euclid's algorithm in cyclotomic fields''. Journal of the London Mathematical Society, vol. s2--10, no. 4, pp. 457–-465 (1975).

\bibitem{S00} Sigrist, F.: ``Cyclotomic quadratic forms''. Journal de Theorie des Nombres Bordeaux, vol. 12, no. 2, pp. 519--530 (2000).

\bibitem{LPR} Lyubashevsky, V., Peikert, C., Regev, O.: ``On ideal lattices and learning with errors over rings''. EUROCRYPT 2010. Lecture Notes in Computer Science, vol. 6110 (2010).

\bibitem{SS} Stehlé, D., Steinfeld, R.: ``Making NTRU as secure as worst-case problems over ideal lattices''. EUROCRYPT 2011. Lecture Notes in Computer Science, vol 6632 (2011).

%\bibitem{HPS98} Hoffstein, J., Pipher, J., Silverman, J.H.: ``NTRU: A ring-based public key cryptosystem''. ANTS, pp. 267--288 (1998).

%\bibitem{M02} Micciancio, D.: ``Generalized compact knapsacks, cyclic lattices, and efficient one-way functions''. Computational Complexity, vol. 16, no. 4, pp. 365--411 (2007).

%\bibitem{LMPR08} Lyubashevsky, V., Micciancio, D., Peikert, C., Rosen, A.: ``SWIFFT: A modest proposal for FFT hashing''. FSE, pp. 54–72 (2008).

\bibitem{SV10} Smart, N. P., Vercauteren, F.: ``Fully homomorphic encryption with relatively small key and
ciphertext sizes''. Public Key Cryptography, pp. 420–443 (2010).

\bibitem{CGS14} Campbell, P., Groves, M., Shepherd, D.: ``Soliloquy: A cautionary tale''. ETSI 2nd Quantum-Safe
Crypto Workshop (2014).

\bibitem{GGH13} Garg, S., Gentry, C., Halevi, S.: ``Candidate multilinear maps from ideal lattices''. EUROCRYPT, pp. 1–17 (2013).

\bibitem{LSS14} Langlois, A., Stehle, D., Steinfeld, R.: ``GGHLite: More efficient multilinear maps from ideal
lattices''. EUROCRYPT 2014, pp. 239–256. Springer (2014).

\bibitem{CDPR16} Cramer, R., Ducas, L., Peikert, C., Regev, O.: ``Recovering short generators of principal ideals in cyclotomic rings''. EUROCRYPT 2016, vol. 9666 of LNCS, pp. 559--585. Springer (2016).

\bibitem{NIST_Round3} Alagic G. et al.: ``Status Report on the Second Round of the {NIST} Post-Quantum Cryptography Standardization Process". NIST (2020).

\bibitem{euclidean} Kim, T., Lee, C., ``Lattice reduction over Euclidean rings with applications to cryptanalysis''. 16th IMA International Conference on Cryptography and Coding, Lecture Notes in Computer Science, vol. 10655, pp. 371–391 (2017).

\bibitem{lllmodules} Lee, C., Pellet--Mary, A., Stehl\'e, D., Wallet, A.: ``An LLL algorithm for module lattices''. Advances in Cryptology, ASIACRYPT 2019, pp. 59-90 (2019).

\bibitem{jsmilne} Milne, J.S.: ``Algebraic Number Theory''. e-book, \url{https://www.jmilne.org/math/CourseNotes/ANT.pdf} (2020).
\end{thebibliography}
\end{document}